\numberwithin{equation}{section}
\numberwithin{figure}{section}
\theoremstyle{plain}
\newtheorem{thm}{\protect\theoremname}
\theoremstyle{plain}
\newtheorem{prop}{\protect\propositionname}
\theoremstyle{plain}
\newtheorem{lem}[thm]{\protect\lemmaname}
\theoremstyle{remark}
\newtheorem{rem}[thm]{\protect\remarkname}
\theoremstyle{definition}
\newtheorem{defn}[thm]{\protect\definitionname}
\newcommand{\R}{\mathbb{R}}
\renewcommand{\d}{\displaystyle}
\providecommand{\lemmaname}{Lemma}
\providecommand{\remarkname}{Remark}
\providecommand{\theoremname}{Theorem}
\providecommand{\propositionname}{Proposition}
\providecommand{\definitionname}{Definition}
\providecommand{\lemmaname}{Lemma}
\providecommand{\remarkname}{Remark}
\providecommand{\theoremname}{Theorem}
\begin{document}
\title{A Christ-Fefferman type approach to the one sided maximal operator }
\author{Francisco J. Martín-Reyes}
\author{Israel P. Rivera-Ríos}
\address{Francisco J. Martín-Reyes, Israel P. Rivera-Ríos. Departamento de Análisis Matemático, Estadística
e Investigación Operativa y Matemática Aplicada. Facultad de Ciencias.
Universidad de Málaga (Málaga, Spain).}
\email{martin\_reyes@uma.es}
\email{israelpriverarios@uma.es}
\author{Pablo Rodríguez-Padilla}
\address{Pablo Rodríguez-Padilla. Departamento de Ciencias Integradas. Facultad de Ciencias Experimentales. Universidad de Huelva (Huelva, Spain)}
\email{rodriguezpadillapablo@uma.es}
\begin{abstract}
In this paper, an approach to the one sided maximal function in the
spirit of the Christ-Fefferman \cite{CF} proof for the strong type
weighted estimates of the maximal function is provided. As applications
of that approach, we provide an alternative proof of the sharp weighted
estimate for the one sided maximal function that was settled by one
of us and de la Torre \cite{MRdT}, a one sided two weight bumps counterpart
of a result of Pérez and Rela \cite{PR}, and also one sided counterparts
of some very recent mixed weak type results due to Sweeting \cite{Sw}.
\end{abstract}
\maketitle
\section{Introduction and Main results}
We recall that the Hardy-Littlewood maximal operator is defined as
\begin{equation}
Mf(x)=\sup_{x\in Q}\frac{1}{|Q|}\int_{Q}|f(y)|dy\label{eq:HL}
\end{equation}
and that for $1<p<\infty$, $w\in A_{p}$ if 
\[
[w]_{A_{p}}=\sup_{Q}\frac{1}{|Q|}\int_{Q}w\left(\frac{1}{|Q|}\int_{Q}w^{-\frac{1}{p-1}}\right)^{p-1}<\infty.
\]
In the equations above, $Q$ stands for cubes with their sides parallel to the coordinate axis.

Since the seminal Muckenhoupt paper \cite{M}, in which he showed
that the $A_{p}$ condition stated above characterizes, for $1<p<\infty$,
the weighted $L^{p}$ boundedness of the Hardy-Littlewood maximal
function, some further proofs of the sufficiency have been provided.
Among them, Christ and Fefferman \cite{CF}, showed that 
\begin{equation}
\|Mf\|_{L^{p}(w)}\leq c_{p,w}\|f\|_{L^{p}(w)}.\label{eq:Buck}
\end{equation}
It is worth noting that it is easy to track the dependence on the
$A_{p}$ constant in their argument and that applying it to the dyadic
maximal function, one has that actually $c_{p,w}\lesssim c_{p}[w]_{A_{p}}^{\frac{1}{p-1}}$,
which is sharp in  terms of the exponent of the $A_{p}$ constant.

Probably, the main highlight of the Christ-Fefferman argument is that
they managed to avoid the usage of the reverse Hölder property of $A_{p}$ weights in their proof. To do that, they relied upon an approach that
could be regarded as one of the first sparse domination results available
in the literature. Let us further expand on this. A key point in their argument is the fact that
\[
\|Mf\|_{L^{p}(x)}\leq\left(\sum_{j,k}\left(\frac{1}{|Q_{j}^{k}|}\int_{Q_{j}^{k}}|f(y)|dy\right)^{p}w(E_{Q_{j}^{k}})\right)^{\frac{1}{p}}
\]
where $Q_{j}^{k}$ are the Calderón-Zygmund cubes of $Mf$ at height
$C_{n}^{k}$ for every integer $k$, where $C_{n}>0$ is a constant
to be chosen, and 
\[
E_{Q_{j}^{k}}=Q_{j}^{k}\setminus\bigcup_{l>k,Q_{i}^{l}\subset Q_{j}^{k}}Q_{i}^{l}\subset Q_{j}^{k}.
\]
It is clear that the sets $E_{Q_{j}^{k}}$ are pairwise disjoint.
Choosing $C_{n}$ large enough, also $|Q_{j}^{k}|\leq2|E_{Q_{j}^{k}}|$.
Nowadays, we call families of cubes having these properties sparse
families.

Let us turn our attention now to the one sided setting. We recall
that the one sided maximal operators $M^{+}$ and $M^{-}$ are defined
as 
\[
M^{+}f(x)=\sup_{h>0}\frac{1}{h}\int_{x}^{x+h}|f(y)|dy\qquad M^{-}f(x)=\sup_{h>0}\frac{1}{h}\int_{x-h}^{x}|f(y)|dy
\]
and that for $1<p<\infty$, the $A_{p}^{+}$ and $A_{p}^{-}$ classes
characterize the weighted $L^{p}$ boundedness of $M^{+}$ and of
$M^{-}$ respectively (see \cite{S}) and are defined as 
\begin{align*}
[w]_{A_{p}^{+}} & =\sup_{a<b<c}\frac{1}{c-a}\int_{a}^{b}w\left(\frac{1}{c-a}\int_{b}^{c}w^{-\frac{1}{p-1}}\right)^{p-1},\\{}
[w]_{A_{p}^{-}} & =\sup_{a<b<c}\frac{1}{c-a}\int_{b}^{c}w\left(\frac{1}{c-a}\int_{a}^{b}w^{-\frac{1}{p-1}}\right)^{p-1}.
\end{align*}
It is worth noting that $A_{p}=A_{p}^{+}\cap A_{p}^{-}$, and that
$A_{p}\subsetneq A_{p}^{+}$ and $A_{p}\subsetneq A_{p}^{-}$. We
recall as well that the $A_{\infty}^{+}$ and $A_{\infty}^{-}$ are
defined as follows. 
\[
[w]_{A_{\infty}^{+}}=\sup_{a<b}\frac{1}{w(a,b)}\int_{a}^{b}M^{-}(w\chi_{(a,b)}),\qquad[w]_{A_{\infty}^{-}}=\sup_{a<b}\frac{1}{w(a,b)}\int_{a}^{b}M^{+}(w\chi_{(a,b)}).
\]

In the last years, quantitative weighted estimates 
have been a very active area of research, leading to important developments
in the theory, such as the sparse domination theory. An important role
in that trend was played by what is known now as the $A_{2}$ theorem,
formerly $A_{2}$ conjecture, that was settled by Hytönen \cite{H}. That result states that if $T$ is a Calderón-Zygmund operator then 
\begin{equation}
\|Tf\|_{L^{2}(w)}\leq c_{T}[w]_{A_{2}}\|f\|_{L^{2}(w)}.\label{eq:A2}
\end{equation}
In the one sided setting, some main questions regarding quantitative
estimates, such as the $A_{2}$ conjecture for one sided Calderón-Zygmund
operators, namely if (\ref{eq:A2}) holds for one sided Calderón-Zygmund
operators replacing $A_{2}$ by its corresponding one sided counterpart,
remain open. There are difficulties in the dyadic approach, that has
been very fruitful in the classical setting, that have not allowed
yet to transfer it to this setting. Trying to, somehow push the one
sided theory in that direction one reasonable first question is if
the Christ-Fefferman argument can be adapted in that way. In this
work we provide an argument that mimics, in some sense, the ideas
of the Christ-Fefferman approach even tough dyadic structures are not used.

As a first application of that approach we give a new proof of the
sharp bound for the maximal function that was obtained by de la Torre and the first author in \cite{MRdT}. 
\begin{thm}
\label{thm:Fuerte}Let $1<p<\infty$ and $w\in A_{p}^{+}$. Then 
\[
\|M^{+}f\|_{L^{p}(w)}\leq c_{p}[w]_{A_{p}^{+}}^{\frac{1}{p-1}}\|f\|_{L^{p}(w)}.
\]
Furthermore, 
\begin{equation}
\|M^{+}f\|_{L^{p}(w)}\leq c_{p}([w]_{A_{p}^{+}}[\sigma]_{A_{\infty}^{-}})^{\frac{1}{p}}\|f\|_{L^{p}(w)}\label{eq:mixedconstant}
\end{equation}
where $\sigma=w^{-\frac{1}{p-1}}$.
\end{thm}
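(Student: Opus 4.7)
The plan is to mimic the Christ--Fefferman argument with rising-sun intervals replacing the Calder\'on--Zygmund cubes. Assume $f\ge 0$, fix a constant $C>2$ to be chosen large, and set $\lambda_{k}=C^{k}$. The open set $\Omega_{k}=\{M^{+}f>\lambda_{k}\}$ decomposes as the disjoint union of its connected components $I_{j}^{k}=(a_{j}^{k},b_{j}^{k})$, and the rising-sun lemma applied to $F(x)-\lambda_{k}x$, where $F$ is a primitive of $f$, furnishes the one-sided Calder\'on--Zygmund conclusions
\[
\frac{1}{|I_{j}^{k}|}\int_{I_{j}^{k}}f=\lambda_{k},\qquad \frac{1}{b_{j}^{k}-x}\int_{x}^{b_{j}^{k}}f\ge \lambda_{k}\ \text{ for every }x\in[a_{j}^{k},b_{j}^{k}).
\]
Setting $E_{j}^{k}=I_{j}^{k}\setminus\Omega_{k+1}$, the first identity applied at level $k+1$ yields $\sum_{I_{i}^{k+1}\subset I_{j}^{k}}|I_{i}^{k+1}|\le |I_{j}^{k}|/C$, and hence the sparsity $|E_{j}^{k}|\ge (1-1/C)|I_{j}^{k}|$ as soon as $C$ is large. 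A standard layer-cake argument parallel to \cite{CF} then gives the one-sided Christ--Fefferman estimate
\[
\|M^{+}f\|_{L^{p}(w)}^{p}\le c_{p,C}\sum_{k,j}\lambda_{k}^{p}\,w(E_{j}^{k}).
\]

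The heart of the matter is to bound the right-hand side by $[w]_{A_{p}^{+}}[\sigma]_{A_{\infty}^{-}}\|f\|_{L^{p}(w)}^{p}$. For each interval $I_{j}^{k}$ I would select a splitting point $y_{j}^{k}\in I_{j}^{k}$ located so that $E_{j}^{k}$ sits essentially in $[a_{j}^{k},y_{j}^{k}]$ while $b_{j}^{k}-y_{j}^{k}\simeq|I_{j}^{k}|$. The rising-sun inequality combined with H\"older in the measure $\sigma\,dx$ yields
\[
\lambda_{k}^{p}(b_{j}^{k}-y_{j}^{k})^{p}\le \int_{y_{j}^{k}}^{b_{j}^{k}}f^{p}w\cdot\left(\int_{y_{j}^{k}}^{b_{j}^{k}}\sigma\right)^{p-1},
\]
while the $A_{p}^{+}$ condition on the triple $(a_{j}^{k},y_{j}^{k},b_{j}^{k})$ gives the matching estimate $\int_{a_{j}^{k}}^{y_{j}^{k}}w\cdot\bigl(\int_{y_{j}^{k}}^{b_{j}^{k}}\sigma\bigr)^{p-1}\le [w]_{A_{p}^{+}}|I_{j}^{k}|^{p}$. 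Combining the two and using $w(E_{j}^{k})\le\int_{a_{j}^{k}}^{y_{j}^{k}}w$ together with $b_{j}^{k}-y_{j}^{k}\simeq|I_{j}^{k}|$ one arrives at
\[
\lambda_{k}^{p}\,w(E_{j}^{k})\lesssim [w]_{A_{p}^{+}}\int_{y_{j}^{k}}^{b_{j}^{k}}f^{p}w.
\]
Summing over $(k,j)$ requires a $\sigma$-Carleson embedding controlling the overlaps of the right halves $(y_{j}^{k},b_{j}^{k})$; this is where $[\sigma]_{A_{\infty}^{-}}$ enters, via a one-sided Fujii--Wilson-type inequality upgrading the Lebesgue sparsity $|E_{j}^{k}|\gtrsim|I_{j}^{k}|$ to the $\sigma$-sparsity $\sigma(E_{j}^{k})\gtrsim [\sigma]_{A_{\infty}^{-}}^{-1}\sigma(I_{j}^{k})$, which then feeds a Doob-type bound for the $\sigma$-maximal function of $f\sigma^{-1}$.

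The main obstacle is the construction of the splitting point $y_{j}^{k}$. The $A_{p}^{+}$ functional places $w$ to the left of $\sigma$, so we are forced to position $E_{j}^{k}$ on the left of $y_{j}^{k}$ while simultaneously keeping the right piece $(y_{j}^{k},b_{j}^{k})$ macroscopic, two demands that are automatic in the classical Christ--Fefferman setup where $w$ and $\sigma$ live on the same cube. I would address this by a pigeonhole on where the $w$-mass of $E_{j}^{k}$ concentrates inside $I_{j}^{k}$, iterating on the right half when the mass sits there; the sparsity of the level-$(k+1)$ children inside $I_{j}^{k}$ controls the iteration depth. Once (\ref{eq:mixedconstant}) is established, the pure bound $\|M^{+}f\|_{L^{p}(w)}\lesssim [w]_{A_{p}^{+}}^{1/(p-1)}\|f\|_{L^{p}(w)}$ follows from the one-sided duality $[\sigma]_{A_{p'}^{-}}=[w]_{A_{p}^{+}}^{1/(p-1)}$ together with $[\sigma]_{A_{\infty}^{-}}\le c_{p}[\sigma]_{A_{p'}^{-}}$, delivering the sharp exponent $1/(p-1)$.
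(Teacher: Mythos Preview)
Your sketch captures the right high-level shape (level sets, rising-sun intervals, a sparseness step, then an application of the $A_p^+$ condition), but there is a genuine gap precisely at the point you flag as ``the main obstacle''. The two requirements you impose on $y_j^k$ are incompatible in general: since $|E_j^k|\ge(1-1/C)|I_j^k|$, any $y_j^k$ with $E_j^k\subset(a_j^k,y_j^k)$ must satisfy $b_j^k-y_j^k\le|I_j^k|/C$, which kills $b_j^k-y_j^k\simeq|I_j^k|$. Weakening to ``most of $w(E_j^k)$ lies left of $y_j^k$'' does not help: the $w$-mass of $E_j^k$ can concentrate arbitrarily close to $b_j^k$, and your pigeonhole iteration then produces an unbounded number of nested right halves $J_m$, each contributing $[w]_{A_p^+}\int_{J_m}f^pw$ to the sum with no decay. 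The Lebesgue sparsity of the level-$(k+1)$ children inside $I_j^k$ says nothing about where the $w$-mass of $E_j^k$ sits, so it does not control the iteration depth. Downstream, the summation over $(k,j)$ of $\int_{y_j^k}^{b_j^k}f^pw$ is also not a Carleson sum in any obvious sense: these right pieces overlap freely across levels, and since the quantity being summed involves $w$ rather than $\sigma$, the invocation of $[\sigma]_{A_\infty^-}$ at this stage is not justified.

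The paper resolves the directionality problem differently, without attempting to build a single splitting point. Each $I_{j,k}=(a,b)$ is partitioned into infinitely many pieces $(x_i,x_{i+1})$ chosen \emph{dyadically in $\sigma$-measure from the right}, i.e.\ $\int_{x_{i+1}}^b\sigma=\int_{x_i}^{x_{i+1}}\sigma$, which forces $\sigma(x_{i+1},b)\simeq\sigma(\tilde x_i,b)$ for $\tilde x_i:=\inf\bigl(E_{j,k}\cap(x_i,x_{i+1})\bigr)$. The $A_p^+$ condition is then applied on the triple $(\tilde x_i,x_{i+1},b)$ for every $i$, and the averages are rewritten as $\sigma$-averages of $f\sigma^{-1}$, reducing everything to the unweighted bound for $M_\sigma$ on $L^p(\sigma)$. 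The second key ingredient, replacing your global sparsity $|E_j^k|\ge(1-1/C)|I_j^k|$, is a \emph{directional} sparsity lemma: from any point $\tilde x_i$ where $M^+f(\tilde x_i)\le 2^{k+1}$, one has $|\{M^+f\le 2^{k+2}\}\cap(\tilde x_i,y)|\ge\tfrac12|(\tilde x_i,y)|$ for \emph{every} $y>\tilde x_i$. This uniform-to-the-right estimate, combined with a restricted $A_{p'}^-$ inequality for $\sigma$, transfers the Lebesgue sparsity to $\sigma$-sparsity on each piece and makes the sum in $i$ and then in $(k,j)$ telescope with bounded overlap. The mixed bound $([w]_{A_p^+}[\sigma]_{A_\infty^-})^{1/p}$ is obtained separately from a two-weight testing theorem, not from the direct argument; the direct argument yields $([w]_{A_p^+}[\sigma]_{A_{p'}^-})^{1/p}=[w]_{A_p^+}^{1/(p-1)}$.
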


At this point it is worth noting that we are going to derive \eqref{eq:mixedconstant}
from a more general two weight bumps estimate which is a one sided
counterpart of a result due to Rela and Pérez \cite{PR}. With respect to the one sided setting, our result can
also be regarded as a quantitative revisit to a work by Riveros, de
Rosa and de la Torre \cite{RdRdT}. We remit the reader to Section
\ref{sec:2W} for more details. 

Let us turn our attention now to mixed
weighted estimates. The study of that kind of estimates began in the
seminal paper by Muckenhoupt and Wheeden \cite{MW} in which they
dealt with inequalities of the form 
\begin{equation}
\left|\left\{ x\in\mathbb{R}:w^{\frac{1}{p}}(x)|Gf(x)|>t\right\} \right|\leq c\int_{\mathbb{R}}\frac{|f|^{p}}{t^{p}}w\label{eq:MWmixed}
\end{equation}
where $G$ stands either for the Hardy-Littlewood maximal operator
or for the Hilbert transform. Later on, Sawyer \cite{SMixed} studied
some related inequalities in the case $p=1$. Since Sawyer's result,
a number of works have been devoted to further understand that kind
of estimates. For further details we remit the reader to \cite{MR2172941,MR3498179,MR3557137,MR3961329,MR3987919,MR3990170,MR4002540,MR4140763,MR4166766,MR4421920,MR4533037,MR4614637,MR4694578,MR4732437}.

In the last years, due to the fact that until a very recent work of
Nieraeth \cite{NArxiv} mixed weak type estimates were the most suitable
way available to have weak type estimates in the matrix weighted setting for operators such as the maximal function, 
(see \cite{CUIMPRR,CUS}), there has been a renewed interest in (\ref{eq:MWmixed}). See for instance \cite{LLORR,LLORR2}. Very recently Sweeting \cite{Sw} showed that if $1<p<\infty$,
such an estimate holds for $G$ being the Hardy Littlewood maximal
function if and only if 
\[
[w]_{A_{p}^{*}}=\sup_{Q}\frac{1}{|Q|^{p}}\|w\chi_{Q}\|_{L^{1,\infty}}\left(\int_{Q}\sigma\right)^{p-1}<\infty.
\]
That $[w]_{A_{p}^{*}}<\infty$ is a necessary condition had already
been established in the aforementioned work by Muckehoupt and Wheeden
\cite{MW}. Sweeting settled the sufficiency of $A_p ^*$ and provided a counterpart for fractional maximal functions as well.

Let us turn our attention now to our contribution. In this work we
provide a one sided counterpart of Sweeting's results. Let us begin
with a definition first.

Given $1<p<\infty$ we say that $w\in A_{p}^{+,*}$ if 
\[
[w]_{A_{p}^{+,*}}=\sup_{a<b<c}\frac{1}{(c-a)^{p}}\|w\chi_{(a,b)}\|_{L^{1,\infty}}\left(\int_{b}^{c}w^{-\frac{1}{p-1}}\right)^{p-1}<\infty
\]

Our result in the case of the one sided maximal operator is the following. 
\begin{thm}
\label{thm:mixed}Let $1<p<\infty$. We have that $w\in A_{p}^{+,*}$
if and only if 
\[
\|w^{\frac{1}{p}}M^{+}f\|_{L^{p,\infty}}\leq c_{w}\|fw^{\frac{1}{p}}\|_{L^{p}}
\]
Furthermore, $c[w]_{A_{p}^{+,*}}^{\frac{1}{p}}\leq c_{w}\leq c'[w]_{A_{p}^{+,*}}^{\frac{2}{p}}$
for some $c,c'>0$ independent of $w$. 
\end{thm}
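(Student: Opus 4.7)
For \emph{necessity} I would use a testing argument in the spirit of Muckenhoupt--Wheeden. Given $a<b<c$, plug $f=\sigma\chi_{(b,c)}$, with $\sigma=w^{-1/(p-1)}$, into the weak-type inequality. For $x\in(a,b)$, taking $h=c-x$ in the definition of $M^+$ gives $M^+f(x)\geq(c-a)^{-1}\int_b^c\sigma$, while a direct computation shows $\|fw^{1/p}\|_{L^p}^p=\int_b^c\sigma$. Substituting the level $t=s^{1/p}(c-a)^{-1}\int_b^c\sigma$ into the distributional inequality for $w^{1/p}M^+f$ yields, for every $s>0$,
$$s\Bigl(\frac{1}{c-a}\int_b^c\sigma\Bigr)^{p}\bigl|\{w>s\}\cap(a,b)\bigr|\leq c_w^{p}\int_b^c\sigma.$$
Taking $\sup_s$ isolates $\|w\chi_{(a,b)}\|_{L^{1,\infty}}$ and, after $\sup_{a<b<c}$, produces $[w]_{A_p^{+,*}}\leq c_w^{p}$, i.e.\ the lower bound in the theorem.

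For \emph{sufficiency}, by homogeneity one reduces to showing
$$\bigl|\{w^{1/p}M^+f>1\}\bigr|\leq C_{p}[w]_{A_p^{+,*}}^{2}\int|f|^{p}w.$$
The plan is to leverage the one-sided Christ--Fefferman sparse decomposition underpinning the proof of Theorem~\ref{thm:Fuerte}. For each $k\in\mathbb{Z}$, let $\{I_j^k=(a_j^k,b_j^k)\}_j$ denote the connected components of $\{M^+f>C^k\}$; for $C>0$ sufficiently large, the associated principal sets $E_{j,k}\subset I_j^k$ are pairwise disjoint and satisfy $|I_j^k|\leq 2|E_{j,k}|$. Splitting the mixed level set by the dyadic level of $M^+f$ (and noting that on $\{C^k<M^+f\leq C^{k+1}\}\cap\{w^{1/p}M^+f>1\}$ one has $w>C^{-(k+1)p}$),
$$\bigl|\{w^{1/p}M^+f>1\}\bigr|\leq\sum_{k,j}C^{(k+1)p}\,\|w\chi_{I_j^k}\|_{L^{1,\infty}}.$$
For each $I_j^k$ I would choose a right extension $c_j^k>b_j^k$, calibrated by the $A_\infty^-$-type balance appearing in Theorem~\ref{thm:Fuerte} so that $\int_{b_j^k}^{c_j^k}\sigma$ is comparable to $\int_{I_j^k}\sigma$, and then apply $A_p^{+,*}$ to the triple $(a_j^k,b_j^k,c_j^k)$. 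A Hölder step on $I_j^k$ against $\sigma$, combined with the one-sided CZ average bound on $I_j^k$ and the disjointness of the $E_{j,k}$'s, collapses the double sum to a multiple of $\int|f|^pw$, with constant $[w]_{A_p^{+,*}}^{2}$.

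The \emph{main obstacle} is the one-sided asymmetry of $A_p^{+,*}$: the interval $(b,c)$ where $\sigma$ is integrated sits strictly to the right of the interval $(a,b)$ where the $L^{1,\infty}$-norm of $w$ is controlled, whereas the $|f|^pw$-mass one wishes to recover is supported inside the sparse interval $I_j^k=(a,b)$ itself. In Sweeting's two-sided setting \cite{Sw}, a CZ cube $Q$ contains both masses on the same scale and no comparison is needed; here the matching of $\int_{b_j^k}^{c_j^k}\sigma$ with the internal $\sigma$-mass of $I_j^k$ costs an additional factor of $[w]_{A_p^{+,*}}$ through a reverse-Hölder-type step on $\sigma$, which is precisely the source of the quadratic exponent (as opposed to the linear exponent in Theorem~\ref{thm:Fuerte}).
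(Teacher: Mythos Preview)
Your necessity argument is correct and in fact slightly cleaner than the paper's: the paper tests only on symmetric configurations $(a-h,a,a+h)$ and then invokes a separate comparison (Proposition~\ref{prop:middleAp*}) to pass to arbitrary $a<b<c$, whereas your direct test already handles the general triple.

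The sufficiency plan, however, has a genuine gap centered on the ``right extension'' $c_j^k>b_j^k$. There is no reason for such a point to satisfy $c_j^k-a_j^k\lesssim|I_j^k|$: the density of $\sigma$ immediately to the right of $b_j^k$ is completely uncontrolled, so matching $\int_{b_j^k}^{c_j^k}\sigma\approx\int_{I_j^k}\sigma$ may push $c_j^k$ arbitrarily far, and the factor $(c_j^k-a_j^k)^p$ coming from $A_p^{+,*}$ then destroys the estimate. The paper never leaves $I_{j,k}=(a,b)$. Instead it performs an \emph{internal} $\sigma$-dyadic subdivision $a=x_0<x_1<\cdots\to b$ with $\sigma(x_{i+1},b)=\sigma(x_i,x_{i+1})$ and applies $A_p^{+,*}$ to the triples $\tilde x_i<x_{i+1}<b$, where $\tilde x_i\in E_{j,k}\cap(x_i,x_{i+1})$. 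The weak-$L^1$ norm $\|w\chi_{(\tilde x_i,x_{i+1})}\|_{L^{1,\infty}}$ sits on the left and the $\sigma$-integral on $(x_{i+1},b)$ on the right, both inside $I_{j,k}$; the one-sided Calder\'on--Zygmund information $\frac{1}{b-\tilde x_i}\int_{\tilde x_i}^b f>2^k$ fits this geometry exactly. Your ``main obstacle'' is thus an artifact of the wrong decomposition rather than an intrinsic feature of the one-sided problem.

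There is a second gap: even granting your extension, the H\"older/CZ step produces $\int_{I_j^k}|f|^p w$, and $\sum_{k,j}\int_{I_j^k}|f|^p w$ diverges (the $I_j^k$ grow as $k\to-\infty$); pairwise disjointness of the $E_{j,k}$ does not help because you never localized to them. The paper's route is different: after the internal subdivision and one use of $A_p^{+,*}$ it is left with $\sum_i\bigl(\sigma(\tilde x_i,b)^{-1}\int_{\tilde x_i}^b f\bigr)^p\sigma(\tilde x_i,b)$. The key Lemma~\ref{lem:Key} together with Lemma~\ref{lem:CondWeightAp*} (this is where the second factor of $[w]_{A_p^{+,*}}$ enters, via $[\sigma]_{A_{2p'}^{\mathcal R,-}}\lesssim[w]_{A_p^{+,*}}^{1/(2p')}$) converts $\sigma(\tilde x_i,b)$ into $\sigma\bigl(F_k\cap(\tilde x_i,x_{i+2})\bigr)$, after which everything is dominated by $\int(M_\sigma^+(f\sigma^{-1}))^p\sigma$ using the bounded overlap of the sets $F_k\cap I_{j,k}\subset\{2^k<M^+f\le 2^{k+2}\}$. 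Your intuition that the quadratic exponent arises from a transfer step on $\sigma$ is correct, but the mechanism is this internal $\sigma$-sparseness, not a right-extension comparison.
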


Recall that, in the case of the fractional maximal operator, the one sided
versions are defined, for $0\leq\alpha<1$, as 
\[
M_{\alpha}^{+}f(x)=\sup_{h>0}\frac{1}{h^{1-\alpha}}\int_{x}^{x+h}|f(y)|dy\qquad M_{\alpha}^{-}f(x)=\sup_{h>0}\frac{1}{h^{1-\alpha}}\int_{x-h}^{x}|f(y)|dy
\]
If we let $1<p,q<\infty$ then we say that $w\in A_{p,q}^{+,*}$ if
\[
[w]_{A_{p,q}^{+,*}}=\sup_{a<b<c}\left(\frac{1}{c-a}\|w^{q}\chi_{(a,b)}\|_{L^{1,\infty}}\right)^{\frac{1}{q}}\left(\frac{1}{c-a}\int_{b}^{c}w^{-p'}\right)^{\frac{1}{p}}<\infty
\]

In this case we have the following result. 
\begin{thm}
\label{thm:mixedFrac}Let $0<\alpha<1$, $1<p<\frac{1}{\alpha}$ ,
$\frac{1}{q}=\frac{1}{p}-\alpha$. We have that $w\in A_{p,q}^{+,*}$
if and only if 
\[
\|wM_{\alpha}^{+}f\|_{L^{q,\infty}}\leq c_{w}\|fw\|_{L^{p}}.
\]
Furthermore, $c[w]_{A_{p,q}^{+,*}}\leq c_{w}\leq c'[w]_{A_{p,q}^{+,*}}^{2}$,
for some $c,c'>0$ independent of $w$. 
\end{thm}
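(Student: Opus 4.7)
The plan is to mirror the proof of Theorem \ref{thm:mixed}, replacing the averaging functional $\tfrac{1}{h}\int$ by $\tfrac{1}{h^{1-\alpha}}\int$ and the diagonal pair $(p,p)$ by the Sobolev pair $(p,q)$ with $1/q=1/p-\alpha$. The exponent identity $1-\alpha=\tfrac{1}{q}+\tfrac{1}{p'}$ will drive all the bookkeeping.

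\textbf{Necessity.} I would fix $a<b<c$ and test the weak inequality on $f=w^{-p'}\chi_{(b,c)}$. For $x\in(a,b)$, the choice $h=c-x$ gives
$$
M_\alpha^+ f(x)\geq \frac{1}{(c-x)^{1-\alpha}}\int_{b}^{c}w^{-p'}\geq \frac{\int_b^c w^{-p'}}{(c-a)^{1-\alpha}},
$$
and, using $\|g\chi_E\|_{L^{q,\infty}}^{q}=\|g^{q}\chi_E\|_{L^{1,\infty}}$ together with the easy computation $\|fw\|_{L^{p}}=\bigl(\int_b^c w^{-p'}\bigr)^{1/p}$, the weak-type hypothesis becomes
$$
\frac{\int_b^c w^{-p'}}{(c-a)^{1-\alpha}}\,\|w^{q}\chi_{(a,b)}\|_{L^{1,\infty}}^{1/q}\leq c_w\Bigl(\int_b^c w^{-p'}\Bigr)^{1/p}.
$$
Rearranging with the identity above extracts exactly $[w]_{A_{p,q}^{+,*}}\lesssim c_w$.

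\textbf{Sufficiency.} For the upper bound, I would recycle the one-sided Christ-Fefferman type decomposition developed earlier in the paper, now tuned to $M_\alpha^+$. Concretely, for each height $t>0$ the level set $\{M_\alpha^+ f>t\}$ would be covered by a collection of almost-disjoint intervals $(a_k,b_k)$, each attached to a right-adjacent interval $(b_k,c_k)$ for which
$$
\frac{1}{(c_k-a_k)^{1-\alpha}}\int_{b_k}^{c_k}|f|\sim t,
$$
and for which the family $\{(b_k,c_k)\}$ enjoys a one-sided sparseness property. Splitting $f=f_{1}+f_{2}$ at a level to be chosen, I would bound $|\{wM_\alpha^+ f_{1}>t/2\}|$ by a Chebyshev/weak-type argument based on $\|w^{q}\chi_{(a_k,b_k)}\|_{L^{1,\infty}}$ and $|\{wM_\alpha^+ f_{2}>t/2\}|$ by a strong-type argument where $\int_{b_k}^{c_k}|f|$ is absorbed via Hölder with exponents $p,p'$ into $\|fw\|_{L^p(b_k,c_k)}\bigl(\int_{b_k}^{c_k}w^{-p'}\bigr)^{1/p'}$. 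On each triple the bump factor
$$
\Bigl(\tfrac{\|w^{q}\chi_{(a_k,b_k)}\|_{L^{1,\infty}}}{c_k-a_k}\Bigr)^{1/q}\Bigl(\tfrac{1}{c_k-a_k}\int_{b_k}^{c_k}w^{-p'}\Bigr)^{1/p}\leq [w]_{A_{p,q}^{+,*}}
$$
is pulled out, and summing along the sparse family reassembles $\|fw\|_{L^{p}}^{p}$.

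\textbf{Main difficulty.} Exactly as in Theorem \ref{thm:mixed}, the weak-$L^{q,\infty}$ nature of the left-hand norm prevents a single clean use of the $A_{p,q}^{+,*}$ hypothesis: each use of $\|w^{q}\chi_{(a_k,b_k)}\|_{L^{1,\infty}}$ in the sparse sum must be compensated by invoking the bump condition an additional time, producing the quadratic dependence $[w]_{A_{p,q}^{+,*}}^{2}$. Adapting the sparse decomposition itself to the scaling $h^{1-\alpha}$ should be essentially mechanical once the construction of the preceding sections is available; the delicate step is keeping the weak-type bookkeeping with the fractional exponents $\tfrac{1}{q}$ and $\tfrac{1}{p}$ honest, and sharpening the exponent on $[w]_{A_{p,q}^{+,*}}$ from $2$ to $1$ looks as genuinely hard as in the classical and the non-one-sided settings.
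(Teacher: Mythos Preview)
Your necessity argument is essentially the paper's: test on $f=w^{-p'}\chi_{(b,c)}$ and read off the $A_{p,q}^{+,*}$ quantity. The paper is slightly more cautious (it treats the degenerate cases $\int_b^c w^{-p'}\in\{0,\infty\}$ separately and only tests with $b$ the midpoint, appealing to Proposition~\ref{prop:middleApq*} afterwards), but the idea is the same.

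For sufficiency, however, the paper does \emph{not} rerun the Christ--Fefferman machinery for $M_\alpha^+$. Instead it reduces to the non-fractional Theorem~\ref{thm:mixed} via the pointwise inequality of Lemma~\ref{lem:LemRed},
\[
M_\alpha^+ f(x)\le \bigl(M^+(f^{p/s}w^{p/s-q/s})(x)\bigr)^{s/q}\,\|fw\|_{L^p}^{p\alpha},\qquad s=1+\tfrac{q}{p'},
\]
together with the identification $[w]_{A_{p,q}^{+,*}}=[w^q]_{A_s^{+,*}}^{1/q}$ (Lemma~\ref{lem:ApqAs}). One then applies Theorem~\ref{thm:mixed} to $M^+$ with weight $w^q$ and exponent $s$; the square $[w]_{A_{p,q}^{+,*}}^2$ is inherited directly from the square in Theorem~\ref{thm:mixed}. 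No new decomposition, no $f_1+f_2$ splitting, no fractional version of Lemma~\ref{lem:Key} is needed.

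Your proposed direct route is not obviously wrong, but calling it ``essentially mechanical'' understates the difficulty. The whole engine of Sections~\ref{sec:Lemmatta}--\ref{sec:Proofs-of-the} rests on the rising-sun structure of $\{M^+f>\lambda\}$: each component $(a,b)$ satisfies $\frac{1}{b-x}\int_x^b|f|>\lambda$ for every $x\in(a,b)$, and Lemma~\ref{lem:Key} uses the exact identity $\frac{1}{|I_i|}\int_{I_i}|f|=\lambda_2$ on the components. Neither has a clean analogue for $M_\alpha^+$, because $h\mapsto h^{-(1-\alpha)}\int_x^{x+h}|f|$ is not monotone in the right way. Also, the paper's argument never splits $f$; the sparseness and the second power of the constant come from Lemma~\ref{lem:CondWeightAp*}, not from a Calder\'on--Zygmund $f_1+f_2$ decomposition. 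So while a direct fractional argument might be developed, what you have sketched does not match the mechanism that actually produces the bound, and the reduction the paper uses is both shorter and avoids all of these issues.
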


At this point it is worth noting that in contrast with Sweeting's
approach, we show that $w\in A_{p,q}^{+,*}$ implies the claimed estimate
reducing the problem to the case of the maximal operator via an inequality
that we borrow from \cite{MR3987919}.

At this point it seems convenient to gather some notation that has already appeared and some to appear yet. Given $1\leq p<\infty$ we define
\[
\begin{split}\|f\|_{L^{p,\infty}}&=\sup_{t>0}t\left|\left\{x\in\mathbb{R} : |f(x)|>t\right\}\right|^\frac{1}{p}\\
\|f\|_{L^{p,1}}&=\int_0^\infty\left|\left\{x\in\mathbb{R} : |f(x)|>t\right\}\right|^\frac{1}{p}dt\\
\end{split}\]
Related to this, it is straightforward to see that $\|f\|_{L^{p,\infty}}=\|f^p\|_{L^{1,\infty}}^{\frac{1}{p}}$.
We shall denote $A\lesssim B$ when there exists a constant $C>0$ that does not depend on the main parameters involved such that $A\leq C B$. Abusing notation if $w$ is a weight we denote $w(E)=\int_Ew(x)dx$.

The remainder of the paper is organized as follows. In Section \ref{sec:Lemmatta}
we provide some lemmatta required for the proofs of the main results.
In Section \ref{sec:Proofs-of-the} we give the proofs of the main
results. Finally in Section \ref{sec:2W} we provide the bump conditions
result that allows to derive \eqref{eq:mixedconstant}.

\section{Lemmatta}\label{sec:Lemmatta}

\subsection{A key \textquotedblleft sparse alike\textquotedblright{} lemma}

Since one sided $A_{p}$ type classes are larger than their classical
counterparts, one needs to ``extract'' more information related
to ``sparseness'' in order to be able to provide results for all
the weights in the class. An application of the the next lemma to
suitable intervals will allow us to keep that ``additional'' at
the cost of having uniformly bounded overlapping instead of being
pairwise disjoint in contrast with Christ-Fefferman $E_{j,k}$ sets. 
\begin{lem}
\label{lem:Key}Let $\lambda_{2}>\lambda_{1}>0$ and let us call 
\[
F=\left\{ z\in \mathbb{R}\,:\,M^{+}f(z)\leq\lambda_{2}\right\} .
\]
Assume that for  some $x\in\mathbb{R}$ we have that $M^{+}f(x)\leq\lambda_{1}$. Then
\[
|F\cap(x,y)|\geq\left(1-\frac{\lambda_{1}}{\lambda_{2}}\right)|(x,y)|
\]
for every $x\leq y$. 
\end{lem}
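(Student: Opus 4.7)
The plan is to prove the equivalent upper bound $|E| \leq (\lambda_1/\lambda_2)(y-x)$ for $E := \{M^+f > \lambda_2\} \cap (x,y)$, and we may replace $f$ by $|f|$ so that $f \geq 0$. The key tool will be a Calderón--Zygmund-style decomposition of the bad set. For each $h > 0$ the map $z \mapsto h^{-1}\int_z^{z+h}f$ is continuous, so $M^+f$ is lower semi-continuous and $\{M^+f > \lambda_2\}$ is open, hence a countable disjoint union of open intervals $(a_j, b_j)$. I would first prove the one-sided rising-sun inequality $\int_{a_j}^{b_j} f \geq \lambda_2(b_j - a_j)$ on each component. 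Setting $H(z) := \int_c^z f - \lambda_2 z$ for a fixed reference $c$, the condition $M^+f(z) > \lambda_2$ reads $H(z) < \sup_{w > z} H(w)$, while $M^+f(b_j) \le \lambda_2$ forces $H$ to be non-increasing on $[b_j, \infty)$. A short continuity argument---letting $w^{*}$ be a point where $H$ attains its maximum on $[z_{0},b_{j}]$ for any candidate $z_{0}\in(a_{j},b_{j})$ with $H(z_{0})>H(b_{j})$, and checking that such $w^{*}$ would satisfy $M^+f(w^{*})\leq\lambda_{2}$ despite lying in the component---gives $H(z) \leq H(b_j)$ on $(a_j, b_j)$; letting $z \to a_j^+$ yields the averaging inequality.

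Next, since $M^+f(x) \leq \lambda_1 < \lambda_2$, the point $x$ lies outside $\{M^+f > \lambda_2\}$, so every component $(a_j, b_j)$ that meets $(x, y)$ must satisfy $a_j \geq x$. By disjointness, at most one of them---denote it $(a_*, b_*)$---can satisfy $b_* > y$; all other components meeting $(x,y)$ are then contained in $(x, a_*)$ (respectively in $(x,y)$ when $(a_*, b_*)$ is absent). Using the averaging inequality on each such component together with the hypothesis $M^+f(x) \leq \lambda_1$ in the integrated form $\int_x^{b_*} f \leq \lambda_1 (b_* - x)$, I obtain
$$\lambda_2 \Bigl(\sum_{j \neq *}(b_j - a_j) + (b_* - a_*)\Bigr) \;\leq\; \int_x^{b_*} f \;\leq\; \lambda_1 (b_* - x).$$

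The quantity I want to bound is $|E| = \sum_{j \neq *}(b_j - a_j) + (y - a_*)$, which differs from the left-hand sum above by exactly $b_* - y$, so
$$|E| \;\leq\; \frac{\lambda_1}{\lambda_2}(b_* - x) - (b_* - y) \;=\; \frac{\lambda_1}{\lambda_2}(y - x) - \Bigl(1 - \frac{\lambda_1}{\lambda_2}\Bigr)(b_* - y) \;\leq\; \frac{\lambda_1}{\lambda_2}(y - x),$$
as desired. (When no type-$(b)$ component exists the argument is even easier: $|E| \leq \lambda_2^{-1}\int_x^y f \leq (\lambda_1/\lambda_2)(y-x)$ directly; the possibility $b_* = +\infty$ can be handled by first approximating $f$ by compactly supported truncations $f_N$, to which the previous bound applies uniformly in $N$.) The main obstacle is the rising-sun inequality in the first paragraph: the one-sided nature of $M^+$ makes locating the right endpoint $b_j$ and excluding a higher interior maximum of $H$ the subtle step. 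Once it is in place, the remaining bookkeeping is clean, and the ``tail'' length $b_*-y$ appears as precisely the amount of non-negative slack needed to absorb the portion of $(a_*,b_*)$ that does not belong to $(x,y)$.
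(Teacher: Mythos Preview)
Your proof is correct and follows essentially the same route as the paper: decompose $\{M^{+}f>\lambda_{2}\}$ into its open components, use the rising-sun inequality $\int_{a_{j}}^{b_{j}}f\geq\lambda_{2}(b_{j}-a_{j})$ (the paper quotes this as the standard equality), and treat separately the case where one component protrudes past $y$. The only cosmetic difference is in that last case: the paper observes $F\cap(x,y)=F\cap(x,b_{*})$ and reduces to the ``contained'' case on $(x,b_{*})$, whereas you carry out the same computation directly and see the nonnegative slack $(1-\lambda_{1}/\lambda_{2})(b_{*}-y)$ appear explicitly.
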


\begin{proof}
We begin considering 
\[
\left\{ z\in\mathbb{R}\,:\,M^{+}f(z)>\lambda_{2}\right\} =\bigcup_{i}I_{i}
\]
We recall that if $I_{i}=(a_{i},b_{i})$ then 
\[
\frac{1}{|I_i|}\int_{I_i}|f|=\lambda_{2}.
\]
and, obviously, $x\not\in I_{i}$. Now we let 
\[
H=\bigcup_{I_{i}\cap(x,y)\not=\emptyset}I_{i}.
\]
If $H=\emptyset$, the desired conclusion would hold trivially, since
$F\cap(x,y)=(x,y)$. Hence we may assume that $H\not=\emptyset$.
Now, we observe that if $I_{i}\cap(x,y)\not=\emptyset$ then $I_{i}\subset(x,\infty)$
since $x\not\in I_{i}$. \\
 Bearing this in mind, there are two possible cases. 
\begin{enumerate}
\item $H\subset(x,y).$ In this case we observe that by the properties of
the intervals $I_{i}$ and since $M^{+}f(x)\leq\lambda_{1}$ we have
that 
\begin{align*}
|H|=|H\cap(x,y)| & =\sum_{i}|I_{i}|=\frac{1}{\lambda_{2}}\sum_{i}\int_{I_{i}}|f|\leq\frac{1}{\lambda_{2}}\int_{x}^{y}|f|\\
 & =\frac{y-x}{\lambda_{2}}\frac{1}{y-x}\int_{x}^{y}|f|\leq(y-x)\frac{\lambda_{1}}{\lambda_{2}}.
\end{align*}
Taking this into account, 
\[
|(x,y)|=|F\cap(x,y)|+|H\cap(x,y)|\leq|F\cap(x,y)|+|(x,y)|\frac{\lambda_{1}}{\lambda_{2}}
\]
and hence 
\[
\left(1-\frac{\lambda_{1}}{\lambda_{2}}\right)|(x,y)|\leq|F\cap(x,y)|.
\]
\item $H\not\subset(x,y)$. Let us call $(c,d)$ the rightmost interval
$I_{i}$ that intersects. Since $H\not\subset(x,y)$ then $d>y>c$.
This yields that $(y,d)\subset H$ and consequently $F\cap(x,y)=F\cap(x,d)$.
The interval $(x,d)$ is in the situation of the first case. Hence
the same argument as above yields that 
\[
|H|\leq(d-x)\frac{\lambda_{1}}{\lambda_{2}}
\]
and, consequently, 
\[
|F\cap(x,y)|=|F\cap(x,d)|\geq\left(1-\frac{\lambda_{1}}{\lambda_{2}}\right)|(x,d)|\geq\left(1-\frac{\lambda_{1}}{\lambda_{2}}\right)|(x,y)|.
\]
\end{enumerate}
\end{proof}

\subsection{Lemmatta concerning weights}
The application of our first lemma will say that restricted $A_p$ on a weight implies that ``sparseness'' in Lebesgue measure is transmited to ``sparseness'' in terms of the weight itself.
\begin{lem}
\label{lem:RWtype}Let $1<r<\infty$. Let $\sigma\in A_{r}^{\mathcal{R},-}$,
namely, assume that
\[
[\sigma]_{A_{r}^{\mathcal{R},-}}=\sup\frac{|E|}{|(a,c)|}\left(\frac{\sigma(b,c)}{\sigma(E)}\right)^{\frac{1}{r}}<\infty.
\]
where the $\sup$ is taken over every $a<b<c$ and every measurable
set $E\subset(a,b)$. Assume that there exists $a_{0}\in\mathbb{R}$,
a set $A$ and $\eta\in(0,1)$ such that for every $z>a_{0}$, 
\[
|A\cap(a_{0},z)|>\eta|(a_{0},z)|.
\]
Then there exists $C>0$ independent of $A$ and $\sigma$ such that
for every $z>a_{0}$, 
\[
\sigma(a_{0},z)\leq C\left(\frac{[\sigma]_{A_{r}^{\mathcal{R},-}}}{\eta}\right)^{r}\sigma\left(A\cap(a_{0},z)\right).
\]
\end{lem}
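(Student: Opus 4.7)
The plan is to decompose $(a_0,z)$ as a disjoint union of intervals $(y_{k+1},y_k)$, $k\geq 0$, and to apply the $A_r^{\mathcal{R},-}$ inequality once on each piece, choosing the test set on each piece to be a disjoint slab of $A$, so that after summation the bound collapses to a single $\sigma(A\cap(a_0,z))$. The main technical point is that the breakpoints $y_k$ must be chosen dyadically with respect to the function $|A\cap(a_0,\cdot)|$, not geometrically in $(a_0,z)$, in order to recover the dependence $([\sigma]_{A_r^{\mathcal{R},-}}/\eta)^r$ rather than a weaker $\eta^{-2r}$.

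Concretely, writing $|A|:=|A\cap(a_0,z)|$ (which the density hypothesis forces to be strictly positive), I would set $y_0=z$ and, using the intermediate value theorem applied to the continuous nondecreasing function $t\mapsto|A\cap(a_0,t)|$, pick $y_{k+1}\in(a_0,y_k)$ satisfying $|A\cap(a_0,y_k)|=2^{-k}|A|$. The hypothesis $|A\cap(a_0,y_k)|>\eta(y_k-a_0)$ then yields the crucial size bound $y_k-a_0<2^{-k}|A|/\eta$; in particular $y_k\downarrow a_0$ and $(a_0,z)$ is, up to a null set, the disjoint union of the intervals $(y_{k+1},y_k)$.

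For each $k\geq 0$, I would then apply the defining inequality of $A_r^{\mathcal{R},-}$ to the triple $(a,b,c)=(a_0,y_{k+1},y_k)$ with test set $E_k=A\cap(y_{k+2},y_{k+1})\subset(a_0,y_{k+1})$, whose Lebesgue measure telescopes to $|E_k|=2^{-k-1}|A|-2^{-k-2}|A|=2^{-k-2}|A|$. Combining this with the size bound on $y_k-a_0$ gives
\[
\sigma(y_{k+1},y_k)\leq [\sigma]_{A_r^{\mathcal{R},-}}^{\,r}\left(\frac{y_k-a_0}{|E_k|}\right)^{r}\sigma(E_k)\leq\left(\frac{4\,[\sigma]_{A_r^{\mathcal{R},-}}}{\eta}\right)^{r}\sigma(A\cap(y_{k+2},y_{k+1})).
\]
Summing over $k\geq 0$ and noting that the sets $A\cap(y_{k+2},y_{k+1})$ are pairwise disjoint subsets of $A\cap(a_0,z)$ finishes the proof with $C=4^{r}$.

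The step I expect to be most delicate is the joint design of two features: the choice of the $y_k$'s dyadic in $|A\cap(a_0,\cdot)|$, and the disjoint choice of test sets $E_k=A\cap(y_{k+2},y_{k+1})$ rather than the nested $A\cap(a_0,y_{k+1})$. Either a geometric partition of $(a_0,z)$ or a nested choice would individually either lose a power of $\eta$ or produce a divergent series after the $A_r^{\mathcal{R},-}$ inequality is summed, so the combination of both ingredients is what yields the advertised constant.
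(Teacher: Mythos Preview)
Your argument is correct and genuinely different from the paper's. (There is a harmless indexing slip in your description: the defining property of $y_{k+1}$ should read $|A\cap(a_0,y_{k+1})|=2^{-(k+1)}|A|$, which is what you actually use in the telescoping computation of $|E_k|$.)

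The paper does not decompose $(a_0,z)$ at all. Instead it invokes an external result: the condition $\sigma\in A_r^{\mathcal{R},-}$ is known to imply the restricted weak type $(r,r)$ inequality for $M^{-}$ with respect to $\sigma$, with constant comparable to $[\sigma]_{A_r^{\mathcal{R},-}}$. The density hypothesis then gives, for every $x\in(a_0,z)$, the pointwise lower bound $M^{-}(\chi_{A\cap(a_0,z)})(x)\geq |A\cap(a_0,x)|/(x-a_0)>\eta$, so $(a_0,z)$ lies inside the $\eta$-level set of $M^{-}(\chi_{A\cap(a_0,z)})$, and a single application of the restricted weak type bound finishes the proof.

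Your route is more self-contained: it uses only the defining inequality of $A_r^{\mathcal{R},-}$ and avoids the black-box weak type estimate, at the cost of the dyadic stopping-time construction in the ``$A$-measure'' and the need to keep the test sets $E_k$ disjoint. The paper's route is shorter but relies on a quoted lemma; yours would be preferable if one wanted the argument to stand on its own. Both yield the same dependence $([\sigma]_{A_r^{\mathcal{R},-}}/\eta)^r$.
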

\begin{proof}
By inspection of the proof of \cite[Lemma 3]{O} it follows that if there exists a constant $\kappa>0$ such that for every $a<b<c$ and every measurable subset $E\subset (a,b)$,
\begin{equation}\label{eq:ApREQuiv}
\frac{|E|}{|(a,c)|}\leq\kappa\left(\frac{\sigma(E)}{\sigma(b,c)}\right)^{\frac{1}{r}}
\end{equation}
then for every measurable set $J$   
\[
\sup_{t>0}t\sigma\left(\left\{ x\in\mathbb{R}\,:\,M^{-}(\chi_{J})>t\right\} \right)^{\frac{1}{r}}\leq C\kappa\|\chi_{J}\|_{L^{r}(\sigma)}.
\]
where $C>0$ is a constant independent of $J$ and $\sigma$.

Note that the least $\kappa>0$ that satisfies is precisely \eqref{eq:ApREQuiv}. Hence, the restricted weak type inequality we have stated actually holds replacing $\kappa$ by $[\sigma]_{A_{r}^{\mathcal{R},-}}$.

Let $z>a_{0}$. Note that if $x\in(a_{0},z)$, 
\begin{align*}
M^{-}(\chi_{A\cap(a_{0},z)})(x) & =\sup_{h>0}\frac{1}{h}\int_{x-h}^{x}\chi_{A\cap(a_{0},z)}=\sup_{h>0}\frac{1}{h}|A\cap(a_{0},z)\cap(x-h,x)|\\
 & \geq\frac{1}{x-a_{0}}|A\cap(a_{0},z)\cap(x-(x-a_{0}),x)|=\frac{1}{x-a_{0}}|A\cap(a_{0},x)|>\eta 
\end{align*}
Then, 
\begin{align*}
 & \sigma(a_{0},z)=\sigma\left(\left\{ y\in(a_{0},z)\ :\ M^{-}(\chi_{A\cap(a_{0},z)})(y)>\eta\right\} \right)\\
 & \leq C\left(\frac{[\sigma]_{A_{r}^{\mathcal{R},-}}}{\eta}\right)^{r}\int_{\mathbb{R}}\chi_{A\cap(a_{0},z)}^{r}\sigma=C\left(\frac{[\sigma]_{A_{r}^{\mathcal{R},-}}}{\eta}\right)^{r}\sigma\left(A\cap(a_{0},z)\right)
\end{align*}
and we are done. 
\end{proof}
In the following lemmatta, that we state and settle separatedly for reader's convenience, we prove that both $w\in A_{p}^+$ and $w\in A_{p}^{+,*}$ imply that $\sigma\in A_{r}^{\mathcal{R},-}$ for a suitable $r>1$.
Let us begin with the first of them, which combined with Lemma \ref{lem:Key} will allow us to settle Theorem \ref{thm:Fuerte}. 
\begin{lem}
\label{lem:CondWeightAp}Let $1<p<\infty$ and $\sigma\in A_{p'}^{-}$
. Then $[\sigma]_{A_{p'}^{\mathcal{R},-}}\leq[\sigma]_{A_{p'}^-}^\frac{1}{p'}$. Consequently, if there exists a measurable set $A$, $\eta\in(0,1)$
and $a\in\mathbb{R}$ such that for every $z>a$, 
\[
|A\cap(a,z)|>\eta|(a,z)|,
\]
then for every $z>a$ 
\[
\sigma(a,z)\lesssim\frac{1}{\eta^{p'}}[\sigma]_{A_{p'}^{-}}\sigma\left(A\cap(a,z)\right).
\]
\end{lem}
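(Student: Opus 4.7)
The plan is to prove the restricted $A_{p'}^-$ bound $[\sigma]_{A_{p'}^{\mathcal{R},-}}\leq[\sigma]_{A_{p'}^-}^{1/p'}$ by a straightforward Hölder argument, and then simply plug that inequality into Lemma \ref{lem:RWtype} to obtain the quantitative conclusion about $\sigma(a,z)$.

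First I would fix $a<b<c$ and a measurable set $E\subset(a,b)$, and write
\[
|E|=\int_{E}\sigma^{1/p'}\sigma^{-1/p'}.
\]
Hölder's inequality with exponents $p'$ and $p$ gives
\[
|E|\leq\sigma(E)^{1/p'}\left(\int_{E}\sigma^{-p/p'}\right)^{1/p}=\sigma(E)^{1/p'}\left(\int_{E}\sigma^{-1/(p'-1)}\right)^{1/p},
\]
where I used $p/p'=p-1=1/(p'-1)$. Then I would use the monotonicity $\int_{E}\sigma^{-1/(p'-1)}\leq\int_{a}^{b}\sigma^{-1/(p'-1)}$ together with the $A_{p'}^-$ condition, rewritten as
\[
\left(\int_{a}^{b}\sigma^{-1/(p'-1)}\right)^{p'-1}\leq\frac{[\sigma]_{A_{p'}^-}\,(c-a)^{p'}}{\sigma(b,c)}.
\]
Raising to the power $\frac{1}{p(p'-1)}=\frac{1}{p'}$ (since $\frac{1}{p}=\frac{p'-1}{p'}$) and combining gives
\[
|E|\leq\sigma(E)^{1/p'}\cdot\frac{[\sigma]_{A_{p'}^-}^{1/p'}(c-a)}{\sigma(b,c)^{1/p'}},
\]
which, after dividing by $c-a$ and taking the supremum over all admissible $(a,b,c,E)$, yields $[\sigma]_{A_{p'}^{\mathcal{R},-}}\leq[\sigma]_{A_{p'}^-}^{1/p'}$.

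For the ``consequently'' part, I would just apply Lemma \ref{lem:RWtype} with $r=p'$. Under the hypothesis $|A\cap(a,z)|>\eta|(a,z)|$ for every $z>a$, that lemma provides
\[
\sigma(a,z)\leq C\left(\frac{[\sigma]_{A_{p'}^{\mathcal{R},-}}}{\eta}\right)^{p'}\sigma(A\cap(a,z)),
\]
and substituting the bound $[\sigma]_{A_{p'}^{\mathcal{R},-}}^{p'}\leq[\sigma]_{A_{p'}^-}$ that was just established gives
\[
\sigma(a,z)\lesssim\frac{1}{\eta^{p'}}[\sigma]_{A_{p'}^-}\,\sigma(A\cap(a,z)),
\]
as claimed.

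There is no real obstacle here: the statement is a quantitative comparison of two weight constants, and both Hölder and the earlier lemma do all of the work. The only thing to be careful with is the bookkeeping of exponents, in particular the identities $p/p'=p-1=1/(p'-1)$ and $1/(p(p'-1))=1/p'$, which are what make the square brackets collapse cleanly to the exponent $1/p'$ on $[\sigma]_{A_{p'}^-}$.
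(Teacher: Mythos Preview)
Your proof is correct and is essentially the same as the paper's argument: both split $|E|=\int_E \sigma^{1/p'}\sigma^{-1/p'}$, apply H\"older with exponents $(p',p)$, enlarge the ``bad'' integral to $(a,b)$, and absorb it using the $A_{p'}^{-}$ condition, then invoke Lemma~\ref{lem:RWtype}. The only cosmetic difference is that the paper introduces the dual weight $w=\sigma^{-(p-1)}$ and uses the identity $[w]_{A_p^+}^{1/p}=[\sigma]_{A_{p'}^-}^{1/p'}$, whereas you work directly with $\sigma$.
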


\begin{proof} The proof of the first inequality stated in this result is probably contained elsewhere, however we provide the argument for reader's convenience. As usual let us call $\sigma=w^{-\frac{1}{p-1}}$.
Let $a<b<c$ and let $E\subset(a,b)$ be a measurable set. Then we
have that 
\begin{align*}
|E| & \leq w(E)^{\frac{1}{p}}\sigma(E)^{\frac{1}{p'}}\\
 & \leq\frac{1}{|(a,c)|}w(a,b)^{\frac{1}{p}}\sigma(b,c)^{\frac{1}{p'}}\left(\frac{\sigma(E)}{\sigma(b,c)}\right)^{\frac{1}{p'}}|(a,c)|\\
 & \leq[w]_{A_{p}^{+}}^{\frac{1}{p}}\left(\frac{\sigma(E)}{\sigma(b,c)}\right)^{\frac{1}{p'}}|(a,c)|\\
 & =[\sigma]_{A_{p'}^{-}}^{\frac{1}{p'}}\left(\frac{\sigma(E)}{\sigma(b,c)}\right)^{\frac{1}{p'}}|(a,c)|
\end{align*}
and by Lemma \ref{lem:RWtype} we are done. 
\end{proof}
Our next lemma will be used in combination with Lemma \ref{lem:Key}
to settle Theorem \ref{thm:mixed}. 
\begin{lem}
\label{lem:CondWeightAp*}Let $1<p<\infty$ and $w\in A_{p}^{+,*}$.
Then the following statements hold.
\begin{enumerate}
    \item  If $s>1$, $w^{\frac{1}{s}}\in A_{p}^{+}$. Furthermore, $[w^\frac{1}{s}]_{A_p^+}\leq 2^p s'[w]_{A_{p}^{+,*}}^{\frac{1}{s}}$.
    \item  $[\sigma]_{A_{2p'}^{\mathcal{R},-}}\le8[w]_{A_{p}^{+,*}}^{\frac{1}{2p'}}$. Consequently, if there exists
$a\in\mathbb{R}$, a  measurable set $A$, and $\eta\in(0,1)$ such that
for every $z>a$, 
\[
|A\cap(a,z)|>\eta|(a,z)|,
\]
Then for every $z>a$, 
\[
\sigma(a,z)\lesssim\frac{1}{\eta^{2p'}}[w]_{A_{p}^{+,*}}\sigma\left(A\cap(a,z)\right).
\]
\end{enumerate}
\end{lem}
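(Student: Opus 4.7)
The plan is to handle (1) through a direct layer-cake estimate that upgrades the weak-$L^{1,\infty}$ control of $w$ to an integral bound on $w^{1/s}$, and then to chain (1) with Lemma \ref{lem:CondWeightAp} to deduce (2).

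For (1), set $W=\|w\chi_{(a,b)}\|_{L^{1,\infty}}$ and use the layer-cake identity after the substitution $t=u^s$ to write $\int_a^b w^{1/s}=\int_0^\infty |\{x\in(a,b):w(x)>u^s\}|\,du$. Splitting the integral at the threshold $u_0=(W/(b-a))^{1/s}$ and combining the trivial bound $|\{w>u^s\}|\le b-a$ on $[0,u_0]$ with the weak-$L^{1,\infty}$ estimate $|\{w>u^s\}|\le W/u^s$ on $[u_0,\infty)$ gives
\[
\int_a^b w^{1/s}\le s'(b-a)^{1-1/s}W^{1/s}.
\]
Jensen's inequality produces the companion bound $\int_b^c\sigma^{1/s}\le(c-b)^{1-1/s}\sigma(b,c)^{1/s}$. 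Plugging both into the definition of $[w^{1/s}]_{A_p^+}$ and collapsing the exponents using $(b-a),(c-b)\le c-a$ yields
\[
[w^{1/s}]_{A_p^+}\le s'\left(\frac{W\sigma(b,c)^{p-1}}{(c-a)^p}\right)^{1/s}\le s'[w]_{A_p^{+,*}}^{1/s},
\]
which is well within the announced $2^p s'[w]_{A_p^{+,*}}^{1/s}$.

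For (2), specialise (1) to $s=2$ to obtain $[w^{1/2}]_{A_p^+}\le 2^{p+1}[w]_{A_p^{+,*}}^{1/2}$. Since $\sigma^{1/2}$ is the $\sigma$-weight attached to $w^{1/2}$, the standard one-sided duality $[\sigma^{1/2}]_{A_{p'}^-}=[w^{1/2}]_{A_p^+}^{1/(p-1)}$ paired with Lemma \ref{lem:CondWeightAp} (and the identity $p'(p-1)=p$) delivers a restricted bound of the form $[\sigma^{1/2}]_{A_{p'}^{\mathcal R,-}}\lesssim [w]_{A_p^{+,*}}^{1/(2p)}$. For any $E\subset(a,b)$ this reads
\[
\frac{|E|}{c-a}\lesssim [w]_{A_p^{+,*}}^{1/(2p)}\left(\frac{\sigma^{1/2}(E)}{\sigma^{1/2}(b,c)}\right)^{1/p'}.
\]
Applying Cauchy-Schwarz $\sigma^{1/2}(E)\le|E|^{1/2}\sigma(E)^{1/2}$ on the numerator, absorbing the extra factor $|E|^{1/(2p')}$ through the trivial $|E|\le c-a$, and recombining with the $A_p^{+,*}$ condition once more to convert $\sigma^{1/2}(b,c)^{1/p'}$ into $\sigma(b,c)^{1/(2p')}$, I would arrive at $[\sigma]_{A_{2p'}^{\mathcal R,-}}\le 8[w]_{A_p^{+,*}}^{1/(2p')}$. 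Feeding this into Lemma \ref{lem:RWtype} with $r=2p'$ immediately gives
\[
\sigma(a,z)\le C\eta^{-2p'}[\sigma]_{A_{2p'}^{\mathcal R,-}}^{2p'}\sigma(A\cap(a,z))\lesssim \eta^{-2p'}[w]_{A_p^{+,*}}\sigma(A\cap(a,z)),
\]
which is the stated consequence.

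The main obstacle will be the denominator manipulation just described, because Hölder only provides $\sigma^{1/2}(b,c)\le(c-b)^{1/2}\sigma(b,c)^{1/2}$, an upper bound on $\sigma^{1/2}(b,c)$ rather than the lower bound needed to control $1/\sigma^{1/2}(b,c)^{1/p'}$. My plan is to avoid any reverse Hölder inequality and instead to invoke $W\sigma(b,c)^{p-1}\le[w]_{A_p^{+,*}}(c-a)^p$ a second time at the end of the derivation, so that the interaction between $\sigma^{1/2}(b,c)$, $\sigma(b,c)^{1/(2p')}$ and $(c-a)$-powers produces exactly the announced exponent $1/(2p')$ on $[w]_{A_p^{+,*}}$ and the numerical constant $8$.
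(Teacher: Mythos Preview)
Your argument for (1) is correct and in fact slightly sharper than the paper's: the layer-cake estimate on $\int_a^b w^{1/s}$ is exactly the content of the Lorentz duality the authors invoke, and your direct handling of arbitrary $a<b<c$ avoids the detour through the midpoint characterisation that costs them the extra $2^p$.

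Part (2), however, has a real gap, and it is exactly the one you flag in your last paragraph. From $\sigma^{1/2}\in A_{p'}^{\mathcal R,-}$ you obtain
\[
\frac{|E|}{c-a}\lesssim\left(\frac{\sigma^{1/2}(E)}{\sigma^{1/2}(b,c)}\right)^{1/p'},
\]
and Cauchy--Schwarz on the numerator gives $\sigma^{1/2}(E)\le|E|^{1/2}\sigma(E)^{1/2}$. But in the denominator you need the \emph{reverse} inequality $\sigma^{1/2}(b,c)\gtrsim(c-b)^{1/2}\sigma(b,c)^{1/2}$, and nothing in your toolkit delivers it. Invoking the $A_p^{+,*}$ condition a second time does not help: that condition is an upper bound on $W\,\sigma(b,c)^{p-1}$, hence on $\sigma(b,c)$; it says nothing about how $\sigma^{1/2}(b,c)$ compares to $\sigma(b,c)^{1/2}$. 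Without some reverse-H\"older information (which $A_p^{+,*}$ alone does not provide for $\sigma$ on the right interval), the passage from the $\sigma^{1/2}$-measure inequality to a $\sigma$-measure inequality simply fails. Equivalently, applying Lemma~\ref{lem:RWtype} to $\sigma^{1/2}$ yields $\sigma^{1/2}(a,z)\lesssim\eta^{-p'}\sigma^{1/2}(A\cap(a,z))$, which cannot be squared into the desired $\sigma$-statement.

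The paper bypasses this obstruction by never passing through $\sigma^{1/2}$-measures. Instead it writes, for $E\subset(a,b)$ with $b$ the midpoint,
\[
|E|=\int_a^b w^{\frac{1}{2p}}\cdot w^{-\frac{1}{2p}}\chi_E,
\]
applies H\"older with exponents $p,p'$, and then uses Jensen on the resulting factor $\int_E\sigma^{1/2}$ so that $\sigma(E)$ itself (not $\sigma^{1/2}(E)$) appears. Your layer-cake bound from (1) controls the $w^{1/2}$-factor, and one application of the $A_p^{+,*}$ condition converts that factor into $\sigma(b,c)^{-1/(2p')}$, yielding directly $|E|/(c-a)\lesssim[w]_{A_p^{+,*}}^{1/(2p')}(\sigma(E)/\sigma(b,c))^{1/(2p')}$. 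The point is that the H\"older splitting is done on $\chi_E$, not on the conclusion of a prior restricted-weight inequality, so the denominator $\sigma(b,c)$ arises from the $A_p^{+,*}$ condition rather than from an integral of $\sigma^{1/2}$.
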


\begin{proof}Let us begin with the first part. Let $a<c$. Note that if $b=\frac{1}{2}(a+c)$, 
\[
\int_{a}^{b}w^{\frac{1}{s}}\leq\|w^{\frac{1}{s}}\chi_{(a,b)}\|_{L^{s,\infty}}\|\chi_{(a,b)}\|_{L^{s',1}}=s'\|w^{\frac{1}{s}}\chi_{(a,b)}\|_{L^{s,\infty}}(b-a)^{1-\frac{1}{s}}.
\]
Hence 
\[
\frac{1}{b-a}\int_{a}^{b}w^{\frac{1}{s}}\leq s'\left(\frac{1}{b-a}\|w\chi_{(a,b)}\|_{L^{1,\infty}}\right)^{\frac{1}{s}}.
\]
Consequently 
\begin{align*}
 & \left(\frac{1}{b-a}\int_{a}^{b}w^{\frac{1}{s}}\right)\left(\frac{1}{b-a}\int_{b}^{c}\sigma^{\frac{1}{s}}\right)^{p-1}\\
 & \leq s'\left(\frac{1}{b-a}\|w\chi_{(a,b)}\|_{L^{1,\infty}}\right)^{\frac{1}{s}}\left(\frac{1}{b-a}\int_{b}^{c}\sigma\right)^{\frac{p-1}{s}}\\
 & \leq s'[w]_{A_{p}^{+,*}}^{\frac{1}{s}}.
\end{align*}
An argument analogous to the one provided to settle Proposition \ref{prop:middleAp*} shows that if $\rho$ is a weight, then
\[[\rho]_{A_p^+}\leq 2^p
 \sup_{a<c,\ b=\frac{a+c}{2}} \left(\frac{1}{b-a}\int_{a}^{b}\rho\right)\left(\frac{1}{b-a}\int_{b}^{c}\rho^{-\frac{1}{p-1}}\right)^{p-1}\]
Taking this into account, we have shown that
\[[w^\frac{1}{s}]_{A_p^+}\leq 2^p s'[w]_{A_{p}^{+,*}}^{\frac{1}{s}}.\]
Let us focus now on the second part. Let $s>1$. If $E\subset(a,b)$ we have that 
\begin{align*}
\frac{|E|}{|(a,c)|} & =2\frac{1}{|(a,b)|}\int_{a}^{b}w^{\frac{1}{ps}}w^{-\frac{1}{ps}}\chi_{E}\\
 & \leq2\left(\frac{1}{b-a}\int_{a}^{b}w^{\frac{1}{s}}\right)^{\frac{1}{p}}\left(\frac{1}{b-a}\int_{a}^{b}\sigma^{\frac{1}{s}}\chi_{E}\right)^{\frac{1}{p'}}\\
 & \leq2\left(s'\right)^{\frac{1}{p}}\left(\frac{1}{b-a}\|w\chi_{(a,b)}\|_{L^{1,\infty}}\right)^{\frac{1}{sp}}\left(\frac{1}{b-a}\int_{a}^{b}\sigma\chi_{E}\right)^{\frac{1}{sp'}}\\
 & \leq2s'[w]_{A_{p}^{+,*}}^{\frac{1}{sp'}}\left(\frac{1}{b-a}\int_{b}^{c}\sigma\right)^{-\frac{1}{sp'}}\left(\frac{1}{b-a}\int_{a}^{b}\sigma\chi_{E}\right)^{\frac{1}{sp'}}\\
 & =2s'[w]_{A_{p}^{+,*}}^{\frac{1}{sp'}}\left(\frac{\sigma(E)}{\sigma(b,c)}\right)^{\frac{1}{sp'}}
\end{align*}
Hence, choosing $s=2$, 
\[
\frac{|E|}{|(a,c)|}\leq 4 [w]_{A_{p}^{+,*}}^{\frac{1}{2p'}}\left(\frac{\sigma(E)}{\sigma(b,c)}\right)^{\frac{1}{2p'}}.
\]
Now, if $z\in(a,c)$ and $E\subset(a,z)$, we have two cases.

\textbf{Case 1.} If $z\in(a,b)$ we let $\overline{a}=a-2(b-z)$. Note that
then $E\subset(\overline{a},z)$ and $z$ is the middle point of $(\overline{a},c)$.
Consequently 
\[
\frac{|E|}{|(\overline{a},c)|}\leq4[w]_{A_{p}^{+,*}}^{\frac{1}{2p'}}\left(\frac{\sigma(E)}{\sigma(z,c)}\right)^{\frac{1}{2p'}}
\]
and since $|(\overline{a},c)|\leq2|(a,c)|$, we have that 
\[
\frac{|E|}{|(a,c)|}\leq8[w]_{A_{p}^{+,*}}^{\frac{1}{2p'}}\left(\frac{\sigma(E)}{\sigma(z,c)}\right)^{\frac{1}{2p'}}.
\]
\textbf{Case 2.} If $z\in(b,c)$ we let $\overline{c}=c+2(z-b)$, then again
$E\subset(a,z)$ and $z$ is the middle point of $(a,\overline{c})$.
This yields
\[
\frac{|E|}{|(a,\overline{c})|}\leq4[w]_{A_{p}^{+,*}}^{\frac{1}{2p'}}\left(\frac{\sigma(E)}{\sigma(z,\overline{c})}\right)^{\frac{1}{2p'}}
\]
Note that since $|(a,\overline{c})|\leq2|(a,c)|$ and $\sigma(z,c)\leq\sigma(z,\overline{c})$
we have that 
\[
\frac{|E|}{|(a,c)|}\leq8[w]_{A_{p}^{+,*}}^{\frac{1}{2p'}}\left(\frac{\sigma(E)}{\sigma(z,c)}\right)^{\frac{1}{2p'}}.
\]
The arguments above imply that $[\sigma]_{A_{2p'}^{\mathcal{R},-}}\le8[w]_{A_{p}^{+,*}}^{\frac{1}{2p'}}$ and hence a direct application of Lemma \ref{lem:RWtype} leads to the desired
result. 
\end{proof}
We borrow our next result from \cite[Lemma 3]{MR3987919}, as we announced above. Our proof
is identical to theirs, however, we include it for reader's convenience. 
\begin{lem}
\label{lem:LemRed}Let $0<\alpha<1$, $1\leq p<\frac{1}{\alpha}$
, $\frac{1}{q}=\frac{1}{p}-\alpha$ and $s=1+\frac{q}{p'}$. Then
\[
M_{\alpha}^{+}(f)(x)\leq M^{+}(f^{p/s}w^{p/s-q/s})(x)^{s/q}\left(\int_{\mathbb{R}}|f(y)|^{p}w(y)^{p}dy\right)^{\alpha}.
\]
\end{lem}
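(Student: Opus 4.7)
The plan is to derive the pointwise inequality directly from Hölder, with exponents tailored so that the resulting two factors match exactly the right hand side. Fix $x\in\mathbb{R}$ and $t>0$; the target is to bound
\[
\frac{1}{t^{1-\alpha}}\int_{x}^{x+t}|f(y)|\,dy
\]
in the claimed form, and then take the supremum over $t>0$.

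First I would factor the integrand as
\[
|f|=\bigl(|f|^{p/q}w^{(p-q)/q}\bigr)\cdot\bigl(|f|^{(q-p)/q}w^{(q-p)/q}\bigr)
\]
and apply Hölder's inequality with the pair of conjugate exponents $q/s$ and $(q/s)'$. The choice of $\beta=p/q$ and $\gamma=(p-q)/q$ is forced by the requirement that the first factor, raised to the $q/s$ power, produce precisely $|f|^{p/s}w^{p/s-q/s}$, which is the function appearing inside $M^{+}$ on the right hand side of the desired inequality.

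Next I would do the exponent bookkeeping. The key algebraic identities, all direct from $s=1+q/p'$ and $1/q=1/p-\alpha$, are
\[
\frac{s}{q}=\frac{1}{q}+\frac{1}{p'}=1-\alpha,\qquad \Bigl(\frac{q}{s}\Bigr)'=\frac{1}{\alpha},\qquad q-p=pq\alpha.
\]
From these, the exponent on $|f|$ in the second Hölder factor is $(1-p/q)(q/s)'=(q-p)/(q\alpha)=p$; the same calculation gives the exponent $p$ on $w$, and the outer exponent is $1/(q/s)'=\alpha$. Consequently,
\[
\int_{x}^{x+t}|f|\leq \Bigl(\int_{x}^{x+t}|f|^{p/s}w^{p/s-q/s}\Bigr)^{s/q}\Bigl(\int_{x}^{x+t}|f|^{p}w^{p}\Bigr)^{\alpha},
\]
and enlarging the second integral to all of $\mathbb{R}$ only worsens the estimate.

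Finally I would redistribute the powers of $t$: dividing by $t^{1-\alpha}$ and invoking $s/q=1-\alpha$, the factor $t^{-(1-\alpha)}=t^{-s/q}$ can be absorbed into the first term, yielding
\[
\frac{1}{t^{1-\alpha}}\int_{x}^{x+t}|f|\leq \Bigl(\frac{1}{t}\int_{x}^{x+t}|f|^{p/s}w^{p/s-q/s}\Bigr)^{s/q}\Bigl(\int_{\mathbb{R}}|f|^{p}w^{p}\Bigr)^{\alpha}.
\]
Taking $\sup_{t>0}$ gives the stated bound. I do not expect any real obstacle: once the splitting is chosen, the argument is pure Hölder together with the identities above. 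The only delicate point is checking that the powers of $t$ cancel precisely, which is guaranteed by $s/q=1-\alpha$, an identity essentially encoded in the definition of $s$.
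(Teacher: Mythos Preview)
Your proposal is correct and follows essentially the same route as the paper: both apply H\"older with the conjugate pair $\frac{1}{1-\alpha}=\frac{q}{s}$ and $\frac{1}{\alpha}$ to the same splitting of $|f|$ (the paper phrases it via the substitution $g=|f|^{p/s}w^{(p-q)/s}$, which is exactly your first factor raised to the $q/s$), and then uses $s/q=1-\alpha$ to absorb the factor $t^{-(1-\alpha)}$. The only cosmetic difference is that you work directly with $|f|$ whereas the paper introduces $g$ first, but the computations are identical.
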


\begin{proof}
To simplify the exposition, we shall assume that $f\geq0$. We set
$g=f^{\frac{p}{s}}w^{\frac{p}{s}-\frac{q}{s}}$. Note that then $f=g^{\frac{s}{p}}w^{\frac{q}{p}-1}$.
Let, $x\in\mathbb{R}$ and $h>0$. Then, by applying Hölder inequality
with exponents $\frac{1}{1-\alpha}$ and $\frac{1}{\alpha}$ 
\begin{align*}
\frac{1}{h^{1-\alpha}}\int_{x}^{x+h}f & =\frac{1}{h^{1-\alpha}}\int_{x}^{x+h}g^{\frac{s}{p}}w^{\frac{q}{p}-1}\\
 & =\frac{1}{h^{1-\alpha}}\int_{x}^{x+h}g^{1-\alpha}g^{\frac{s}{p}+\alpha-1}w^{q\alpha}\\
 & \leq\left(\frac{1}{h}\int_{x}^{x+h}g\right)^{1-\alpha}\left(\int_{x}^{x+h}g^{\frac{\frac{s}{p}+\alpha-1}{\alpha}}w^{q}\right)^{\alpha}
\end{align*}
Now, since $\frac{\frac{s}{p}+\alpha-1}{\alpha}=s$, we have that
\begin{align*}
\left(\int_{x}^{x+h}g^{\frac{\frac{s}{p}+\alpha-1}{\alpha}}w^{q}\right)^{\alpha} & =\left(\int_{x}^{x+h}g^{s}w^{q}\right)^{\alpha}=\left(\int_{x}^{x+h}f^{p}w^{p-q}w^{q}\right)^{\alpha}\\
 & \leq\left(\int_{\mathbb{R}}f^{p}w^{p}\right)^{\alpha}
\end{align*}
on the other hand, since $1-\alpha=\frac{s}{q}$ 
\[
\left(\frac{1}{h}\int_{x}^{x+h}g\right)^{1-\alpha}=\left(\frac{1}{h}\int_{x}^{x+h}f^{\frac{p}{s}}w^{\frac{p}{s}-\frac{q}{s}}\right)^{\frac{s}{q}}.
\]
Hence 
\[
\frac{1}{h^{1-\alpha}}\int_{x}^{x+h}f\leq\left(\frac{1}{h}\int_{x}^{x+h}f^{\frac{p}{s}}w^{\frac{p}{s}-\frac{q}{s}}\right)^{\frac{s}{q}}\left(\int_{\mathbb{R}}f^{p}w^{p}\right)^{\alpha}
\]
and taking $\sup$ in $h>0$ we are done. 
\end{proof}
We end up this section showing the relation between the $A_{p,q}^{+,*}$
and the $A_{s}^{+,*}$ class for a suitable $s>1$. 
\begin{lem}
\label{lem:ApqAs}Let $0<\alpha<1$, $1\leq p<\frac{1}{\alpha}$ ,
$\frac{1}{q}=\frac{1}{p}-\alpha$ and $s=1+\frac{q}{p'}$. Then $w\in A_{p,q}^{+,*}$
if and only if $w^{q}\in A_{s}^{+,*}.$ Furthermore, 
\[
[w]_{A_{p,q}^{+,*}}=[w^{q}]_{A_{s}^{+,*}}^{\frac{1}{q}}.
\]
\end{lem}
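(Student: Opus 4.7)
The lemma is essentially an algebraic identification between the definitions of $A_{p,q}^{+,*}$ and $A_{s}^{+,*}$, so I would settle it by a direct bookkeeping of exponents without invoking any further analytic tool. Starting from the definition of $[w]_{A_{p,q}^{+,*}}$ I would raise both sides to the $q$-th power and regroup the powers of $c-a$, arriving at an expression of the form
\[
[w]_{A_{p,q}^{+,*}}^{q}=\sup_{a<b<c}\frac{1}{c-a}\|w^{q}\chi_{(a,b)}\|_{L^{1,\infty}}\left(\frac{1}{c-a}\int_{b}^{c}w^{-p'}\right)^{q/p'}.
\]

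To match this with $[w^{q}]_{A_{s}^{+,*}}$, the two numerical identities to use are $s-1 = q/p'$ (which is just the definition $s=1+q/p'$ rearranged) and consequently $q/(s-1) = p'$. The first makes the exponent of the integral factor in the reformulated $[w]_{A_{p,q}^{+,*}}^{q}$ coincide with the exponent $s-1$ appearing in the definition of $[w^{q}]_{A_{s}^{+,*}}$. The second identifies $(w^{q})^{-1/(s-1)}$ with $w^{-p'}$, so that the two integrals against a power of $w$ over $(b,c)$ are literally the same. Writing out $[w^{q}]_{A_{s}^{+,*}}$ after this substitution and distributing $(c-a)^{-s}=(c-a)^{-1}(c-a)^{-(s-1)}$ across the two factors reproduces exactly the expression displayed above.

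No real obstacle is to be expected here; the only care required is to verify that the supremum is taken over the same set of triples $a<b<c$ on both sides, which is automatic, and that the powers of $c-a$ line up correctly, which is ensured by the choice $s=1+q/p'$. Once the identity $[w^{q}]_{A_{s}^{+,*}} = [w]_{A_{p,q}^{+,*}}^{q}$ is settled at the level of the supremand, taking $q$-th roots yields the stated equality $[w]_{A_{p,q}^{+,*}}=[w^{q}]_{A_{s}^{+,*}}^{1/q}$, and the \emph{if and only if} part is then a free byproduct of the finiteness of either quantity being equivalent to the finiteness of the other.
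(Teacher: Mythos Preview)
Your proposal is correct and follows essentially the same approach as the paper: both arguments reduce to checking, termwise in the supremum, the algebraic identities $s-1=q/p'$ and $(w^{q})^{-1/(s-1)}=w^{-p'}$, and then matching the powers of $c-a$. The only cosmetic difference is that you raise to the $q$-th power first and take roots at the end, whereas the paper factors out the exponent $1/q$ from the supremand directly.
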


\begin{proof}
Recall that 
\[
[w]_{A_{p,q}^{+,*}}=\sup_{a<b<c}\left(\frac{1}{c-a}\|w^{q}\chi_{(a,b)}\|_{L^{1,\infty}}\right)^{\frac{1}{q}}\left(\frac{1}{c-a}\int_{b}^{c}w^{-p'}\right)^{\frac{1}{p'}}
\]
and that 
\[
[w^{q}]_{A_{s}^{+,*}}^{\frac{1}{q}}=\sup_{a<b<c}\left(\frac{1}{c-a}\right)^{s}\|w^{q}\chi_{(a,b)}\|_{L^{1,\infty}}\left(\int_{b}^{c}w^{q\frac{1}{1-s}}\right)^{s-1}.
\]
Bearing this in mind we argue as follows, 
\begin{align*}
 & \left(\frac{1}{c-a}\|w^{q}\chi_{(a,b)}\|_{L^{1,\infty}}\right)^{\frac{1}{q}}\left(\frac{1}{c-a}\int_{b}^{c}w^{-p'}\right)^{\frac{1}{p'}}\\
= & \left[\left(\frac{1}{c-a}\|w^{q}\chi_{(a,b)}\|_{L^{1,\infty}}\right)\left(\frac{1}{c-a}\int_{b}^{c}w^{q\frac{-p'}{q}}\right)^{\frac{q}{p'}}\right]^{\frac{1}{q}}\\
= & \left[\left(\frac{1}{c-a}\right)^{s}\|w^{q}\chi_{(a,b)}\|_{L^{1,\infty}}\left(\int_{b}^{c}w^{q\frac{1}{1-s}}\right)^{s-1}\right]^{\frac{1}{q}}.
\end{align*}
Consequently 
\[
[w]_{A_{p,q}^{+,*}}=[w^{q}]_{A_{s}^{+,*}}^{\frac{1}{q}}.
\]
\end{proof}

\section{Proofs of the main theorems}\label{sec:Proofs-of-the}

\subsection{Theorem \ref{thm:Fuerte} and sufficiency of Theorems \ref{thm:mixed}
and \ref{thm:mixedFrac}}

For all three proofs, we begin letting 
\[
O_{k}=\left\{ x\in\mathbb{R}\,:\,M^{+}f(x)>2^{k}\right\} .
\]
Again, for convenience, we assume that $f\geq0$. Since $O_{k}$ is
an open set, there exists a sequence $\{I_{j,k}\}_{j}$ of pairwise
disjoint intervals such that $O_{k}=\bigcup_{j}I_{j,k}$ and such
that 
\[
\frac{1}{b_{jk}-x}\int_{x}^{b_{jk}}f>2^{k}\qquad\text{for every }x\in I_{j,k}=(a_{jk},b_{jk}).
\]
We define 
\[
E_{j,k}=I_{j,k}\cap\left\{ x\,:\,M^{+}f(x)\leq2^{k+1}\right\} 
\]
and
\[
F_{k}=\left\{ x\in\mathbb{R}\,:\,M^{+}f(x)\leq2^{k+2}\right\} .
\]
Armed with the definitions above we begin providing our proof of Theorem
\ref{thm:Fuerte}

\subsubsection{Proof of Theorem \ref{thm:Fuerte}}
Note that the sets $E_{j,k}$ are pairwise disjoint
for every $j$ and every $k$. Then we have that 
\[
M^{+}f(x)  =\sum_{j,k}M^{+}f(x)\chi_{E_{j,k}}(x)\leq\sum_{j,k}2^{k+1}\chi_{E_{j,k}}(x).\]
Hence 
\[
\int_{\mathbb{R}}(M^{+}f(x))^{p}w(x)dx\leq2^{p}\sum_{j,k}2^{kp}w(E_{j,k}).
\]
Now we focus on each term $2^{kp}w(E_{j,k})$. Let us call $I_{j,k}=(a,b)$.
We split this interval as follows 
\[
\int_{x_{i+1}}^{b}\sigma=\int_{x_{i}}^{x_{i+1}}\sigma.
\]
where $x_{0}=a$. Consequently $\int_{x_{i}}^{b}\sigma=\frac{1}{2^{i}}\int_{a}^{b}\sigma.$
Then 
\begin{align*}
2^{kp}w(E_{j,k}) & =2^{kp}\sum_{i=0}^{\infty}w(E_{j,k}\cap(x_{i},x_{i+1}))
\end{align*}
If we call $\tilde{x_{i}}=\inf\left\{ z\in E_{j,k}\cap(x_{i},x_{i+1})\right\} $
and we take into account the properties of each $I_{j,k}$, 
\begin{align*}
 & 2^{kp}\sum_{i=0}^{\infty}w(E_{j,k}\cap(x_{i},x_{i+1}))=2^{kp}\sum_{i=0}^{\infty}w(E_{j,k}\cap(\tilde{x_{i}},x_{i+1}))\\
 & \leq\sum_{i=0}^{\infty}\left(\frac{1}{\int_{\tilde{x_{i}}}^{b}\sigma}\int_{\tilde{x_{i}}}^{b}f\right)^{p}\sigma(\tilde{x_{i}},b)^{p-1}\frac{1}{(b-\tilde{x_{i}})^{p}}w(E_{j,k}\cap(\tilde{x_{i}},x_{i+1}))\sigma(\tilde{x_{i}},b)\\
 & \leq2^{p-1}\sum_{i=0}^{\infty}\left(\frac{1}{\int_{\tilde{x_{i}}}^{b}\sigma}\int_{\tilde{x_{i}}}^{b}f\right)^{p}\sigma(x_{i+1},b)^{p-1}\frac{1}{(b-\tilde{x_{i}})^{p}}w(\tilde{x_{i}},x_{i+1})\sigma(\tilde{x_{i}},b)\\
 & \leq2^{p-1}[w]_{A_{p}^{+}}\sum_{i=0}^{\infty}\left(\frac{1}{\int_{\tilde{x_{i}}}^{b}\sigma}\int_{\tilde{x_{i}}}^{b}f\right)^{p}\sigma(\tilde{x_{i}},b)=(*)
\end{align*}
At this point note that since $\tilde{x}_{i}\in E_{j,k}$ then $Mf(\tilde{x}_{i})\leq2^{k+1}$
and then, by Lemma \ref{lem:Key} we have that
\[
|(\tilde{x}_{i},y)|\leq\frac{1}{1-\frac{2^{k+1}}{2^{k+2}}}|F_{k}\cap(\tilde{x}_{i},y)|=2|F_{k}\cap(\tilde{x}_{i},y)|
\]
for every $y>\tilde{x}_{i}$. This fact combined with Lemma \ref{lem:CondWeightAp}
yields\textcolor{black}{{} 
\[
\begin{aligned}\sigma(\tilde{x_{i}},b) & \leq\sigma(x_{i},b)=4\sigma(x_{i+1},x_{i+2})\\
 & \leq4\sigma(\tilde{x_{i}},x_{i+2})\lesssim[\sigma]_{A_{p'}^{-}}\sigma(F_{k}\cap(\tilde{x_{i}},x_{i+2})).
\end{aligned}
\]
Using this estimate we can continue the computations above as follows
\[
\begin{aligned}(*)\lesssim & 2^{p+1}[w]_{A_{p}^{+}}[\sigma]_{A_{p'}^{-}}\sum_{i=0}^{\infty}\left(\frac{1}{\int_{\tilde{x_{i}}}^{b}\sigma}\int_{\tilde{x_{i}}}^{b}f\right)^{p}\sigma(F_{k}\cap(\tilde{x_{i}},x_{i+2}))\\
 & \leq2^{p+1}[w]_{A_{p}^{+}}[\sigma]_{A_{p'}^{-}}\sum_{i=0}^{\infty}\int_{F_{k}\cap(\tilde{x_{i}},x_{i+2})}(M_{\sigma}(f/\sigma))^{p}\sigma.
\end{aligned}
\]
Gathering the estimates above then we have that 
\[
\int_{\mathbb{R}}(M^{+}f(x))^{p}w(x)dx\leq2^{2p+1}[w]_{A_{p}^{+}}[\sigma]_{A_{p'}^{-}}\sum_{j,k}\sum_{i=0}^{\infty}\int_{F_{k}\cap(\tilde{x_{i}},x_{i+2})}(M_{\sigma}(f/\sigma))^{p}\sigma
\]
Hence it remains to deal with the triple sum in the right hand side.
We argue as follows. First we note that by the definition of the sequence
$\{x_{i}\}_{i=0}^{\infty}$ 
\begin{align}
 & \sum_{j,k}\sum_{i=0}^{\infty}\int_{F_{k}\cap(\tilde{x_{i}},x_{i+2})}(M_{\sigma}(f/\sigma))^{p}\sigma\label{eq:Sumas}\\
\leq & \sum_{j,k}\sum_{i=0}^{\infty}\int_{F_{k}\cap(x_{i},x_{i+1})}(M_{\sigma}(f/\sigma))^{p}\sigma+\int_{F_{k}\cap(x_{i},x_{i+1})}(M_{\sigma}(f/\sigma))^{p}\sigma\nonumber \\
\leq & 2\sum_{k}\sum_{j}\int_{I_{j,k}\cap F_{k}}(M_{\sigma}(f/\sigma))^{p}\sigma\nonumber 
\end{align}
Now we observe that by the definition of the sets $I_{j,k}\cap F_{k}$
they are pairwise disjoint and clearly 
\[
\bigcup_{j}I_{j,k}\cap F_{k}\subset\left\{ x\in\mathbb{R}:\lambda^{k}<\,M^{+}f(x)\leq\lambda^{k+2}\right\} .
\]
Consequently 
\begin{align*}
 & \sum_{k}\sum_{j}\int_{I_{j,k}\cap F_{k}}(M_{\sigma}(f/\sigma))^{p}\sigma\\
\leq & \sum_{k}\int_{\left\{ x\in\mathbb{R}:\lambda^{k}<\,M^{+}f(x)\leq\lambda^{k+2}\right\} }(M_{\sigma}(f/\sigma))^{p}\sigma\\
\leq & \sum_{k}\int_{\left\{ x\in\mathbb{R}:\lambda^{k}<\,M^{+}f(x)\leq\lambda^{k+1}\right\} }(M_{\sigma}(f/\sigma))^{p}\sigma+\int_{\left\{ x\in\mathbb{R}:\lambda^{k+1}<\,M^{+}f(x)\leq\lambda^{k+2}\right\} }(M_{\sigma}(f/\sigma))^{p}\sigma\\
\leq & 2\int_{\mathbb{R}}(M_{\sigma}(f/\sigma))^{p}\sigma
\end{align*}
To end our estimate, we recall that 
\[
\left\Vert M_{\sigma}f\right\Vert _{L^{p}(\sigma)}\leq c\|f\|_{L^{p}(\sigma)}
\]
for some constant independent of $\sigma$ (see \cite{Sj,B}). Consequently
\[
\int_{\mathbb{R}}(M_{\sigma}(f/\sigma))^{p}\sigma\leq c^{p}\int_{\mathbb{R}}|f|^{p}\sigma^{1-p}=\int_{\mathbb{R}}|f|^{p}w.
\]
Gathering the estimates above we are done.}

\subsubsection{Proof of Theorem \ref{thm:mixed}}

Recall that that by the properties of the weak $L^{p}$ norm, 
\[
\|w^{\frac{1}{p}}M^{+}f\|_{L^{p,\infty}}=\|w(M^{+}f)^{p}\|_{L^{1,\infty}}^{\frac{1}{p}}.
\]
Then 
\begin{align*}
\|w(M^{+}f)^{p}\|_{L^{1,\infty}} & =\sup_{t>0}t\left|\left\{ x\in\mathbb{R}\,:\,w(M^{+}f)^{p}>t\right\} \right|\\
 & =\sup_{t>0}\sum_{k}t\left|\left\{ x\in O_{k}\setminus O_{k+1}\,:\,w(M^{+}f)^{p}>t\right\} \right|\\
 & \leq\sup_{t>0}\sum_{k}t\left|\left\{ x\in O_{k}\setminus O_{k+1}\,:\,w2^{(k+1)p}>t\right\} \right|\\
 & \leq\sup_{t>0}\sum_{k,j}2^{(k+1)p}\frac{t}{2^{(k+1)p}}\left|\left\{ x\in E_{jk}\,:\,w>\frac{t}{2^{(k+1)p}}\right\} \right|
\end{align*}
Now we focus on $2^{(k+1)p}\frac{t}{2^{(k+1)p}}\left|\left\{ x\in E_{jk}\,:\,w>\frac{t}{2^{(k+1)p}}\right\} \right|$.
Let us call $I_{jk}=(a,b)$. We split this interval as follows 
\[
\int_{x_{i+1}}^{b}\sigma=\int_{x_{i}}^{x_{i+1}}\sigma.
\]
where $x_{0}=a$. Consequently $\int_{x_{i}}^{b}\sigma=\frac{1}{2^{i}}\int_{a}^{b}\sigma.$
Then 
\begin{align*}
 & 2^{(k+1)p}\frac{t}{2^{(k+1)p}}\left|\left\{ x\in E_{jk}\,:\,w>\frac{t}{2^{(k+1)p}}\right\} \right|\\
= & \sum_{i=0}^{\infty}2^{(k+1)p}\frac{t}{2^{(k+1)p}}\left|\left\{ x\in E_{jk}\cap(x_{i},x_{i+1})\,:\,w>\frac{t}{2^{(k+1)p}}\right\} \right|
\end{align*}
If we call $\tilde{x_{i}}=\inf\left\{ z\in E_{j,k}\cap(x_{i},x_{i+1})\right\} $
and we take into account the properties of each $I_{j,k}$, 
\begin{align*}
 & \sum_{i=0}^{\infty}2^{(k+1)p}\frac{t}{2^{(k+1)p}}\left|\left\{ x\in E_{jk}\cap(x_{i},x_{i+1})\,:\,w>\frac{t}{2^{(k+1)p}}\right\} \right|\\
= & \sum_{i=0}^{\infty}2^{(k+1)p}\frac{t}{2^{(k+1)p}}\left|\left\{ x\in E_{jk}\cap(\tilde{x}_{i},x_{i+1})\,:\,w>\frac{t}{2^{(k+1)p}}\right\} \right|\\
\leq & 2^{p}\sum_{i=0}^{\infty}\left(\frac{1}{\sigma(\tilde{x_{i}},b)}\int_{\tilde{x_{i}}}^{b}f\right)^{p}\frac{\sigma(\tilde{x_{i}},b)^{p}}{(b-\tilde{x_{i}})^{p}}\frac{t}{2^{(k+1)p}}\left|\left\{ x\in E_{jk}\cap(\tilde{x}_{i},x_{i+1})\,:\,w>\frac{t}{2^{(k+1)p}}\right\} \right|\\
\leq & 2^{2p-1}\sum_{i=0}^{\infty}\left(\frac{1}{\sigma(\tilde{x_{i}},b)}\int_{\tilde{x_{i}}}^{b}f\right)^{p}\frac{\sigma(x_{i+1},b)^{p-1}}{(b-\tilde{x_{i}})^{p}}\frac{t}{2^{(k+1)p}}\left|\left\{ x\in(\tilde{x}_{i},x_{i+1})\,:\,w>\frac{t}{2^{(k+1)p}}\right\} \right|\sigma(\tilde{x_{i}},b)\\
\leq & 2^{2p-1}\sum_{i=0}^{\infty}\left(\frac{1}{\sigma(\tilde{x_{i}},b)}\int_{\tilde{x_{i}}}^{b}f\right)^{p}\sigma(x_{i+1},b)^{p-1}\frac{1}{(b-\tilde{x_{i}})^{p}}\|w\chi_{(\tilde{x}_{i},x_{i+1})}\|_{L^{1,\infty}}\sigma(\tilde{x_{i}},b)\\
\leq & [w]_{A_{p}^{+,*}}2^{2p-1}\sum_{i=0}^{\infty}\left(\frac{1}{\sigma(\tilde{x_{i}},b)}\int_{\tilde{x_{i}}}^{b}f\right)^{p}\sigma(\tilde{x_{i}},b)=(*)
\end{align*}
At this point note that since $\tilde{x}_{i}\in E_{j,k}$ then $Mf(\tilde{x}_{i})\leq2^{k+1}$
and then, by Lemma \ref{lem:Key} we have that for every $y>\tilde{x}_{i}$
we have that
\[
|(\tilde{x}_{i},y)|\leq\frac{1}{1-\frac{2^{k+1}}{2^{k+2}}}|F_{k}\cap(\tilde{x}_{i},y)|=2|F_{k}\cap(\tilde{x}_{i},y)|
\]
Taking into account the preceding line, Lemma \ref{lem:CondWeightAp*}
yields that\textcolor{black}{
\[
\begin{aligned}\sigma(\tilde{x_{i}},b) & \leq\sigma(x_{i},b)=4\sigma(x_{i+1},x_{i+2})\\
 & \leq4\sigma(\tilde{x_{i}},x_{i+2})\lesssim[w]_{A_{p}^{+,*}}\sigma(F_{k}\cap(\tilde{x_{i}},x_{i+2})).
\end{aligned}
\]
Using this estimate we can continue the computations above as follows
\[
\begin{aligned}(*)\leq & 2^{p+1}[w]_{A_{p}^{+,*}}^{2}\sum_{i=0}^{\infty}\left(\frac{1}{\int_{\tilde{x_{i}}}^{b}\sigma}\int_{\tilde{x_{i}}}^{b}f\right)^{p}\sigma(F_{k}\cap(\tilde{x_{i}},x_{i+2}))\\
 & \leq2^{p+1}[w]_{A_{p}^{+,*}}^{2}\sum_{i=0}^{\infty}\int_{F_{k}\cap(\tilde{x_{i}},x_{i+2})}(M_{\sigma}(f/\sigma))^{p}\sigma
\end{aligned}
\]
From this point arguing as we did in (\ref{eq:Sumas}) we are done.}

\subsection{Proof of Theorem \ref{thm:mixedFrac}. Sufficiency}

By Lemma \ref{lem:LemRed} and the properties of Lorentz spaces, 
\[
\|wM_{\alpha}^{+}f\|_{L^{q,\infty}}\leq\left(\int_{\mathbb{R}}f(y)^{p}w(y)^{p}dy\right)^{\alpha}\|w^{\frac{q}{s}}M^{+}(f^{p/s}w^{p/s-q/s})(x)\|_{L^{s,\infty}}^{\frac{s}{q}}.
\]
Now we recall that, also by Lemma \ref{lem:ApqAs} $w^{q}\in A_{s}^{+,*}$
for $s=1+\frac{q}{p'}$ and $[w]_{A_{p,q}^{+,*}}=[w^{q}]_{A_{s}^{+,*}}^{\frac{1}{q}}$.
Then, invoking Theorem \ref{thm:mixed}, we have that 
\begin{align*}
\|(w^{q})^{\frac{1}{s}}M(f^{p/s}w^{p/s-q/s})\|_{L^{s,\infty}}^{\frac{s}{q}} & \lesssim\left([w^{q}]_{A_{s}^{+,*}}^{\frac{2}{s}}\right)^{\frac{s}{q}}\|f^{p/s}w^{p/s-q/s}\|_{L^{s}(w^{q})}^{\frac{s}{q}}\\
 & =[w]_{A_{p,q}^{+,*}}^{2}\|f^{p/s}w^{p/s-q/s}\|_{L^{s}(w^{q})}^{\frac{s}{q}}.
\end{align*}
Now we note that working on the right hand side, 
\[
\|f^{p/s}w^{p/s-q/s}\|_{L^{s}(w^{q})}^{\frac{s}{q}}=\left(\int_{\mathbb{R}}f^{p}w^{p-q+q}\right)^{\frac{1}{q}}=\left(\int_{\mathbb{R}}f^{p}w^{p}\right)^{\frac{1}{q}}.
\]
Finally, gathering the estimates above, 
\[
\|wM_{\alpha}^{+}f\|_{L^{q,\infty}}\lesssim[w]_{A_{p,q}^{+,*}}^{2}\left(\int_{\mathbb{R}}f(y)^{p}w(y)^{p}dy\right)^{\alpha+\frac{1}{q}}=[w]_{A_{p,q}^{+,*}}^{2}\left(\int_{\mathbb{R}}f(y)^{p}w(y)^{p}dy\right)^{1/p},
\]
as we wanted to show. 
\begin{rem}
It is worth noting that the argument we have just provided, with the
obvious changes, yields a new proof of the sufficiency due to Sweeting
for the classical setting. 
\end{rem}

\subsection{Proof of Theorems \ref{thm:mixed} and \ref{thm:mixedFrac}. Necessity} Before settling the necessity, we present two results that show, in a  quantitative way, that the $A_{p}^{+,*}$ and the
$A_{p,q}^{+,*}$ conditions hold if, instead of taking supremum over
any subdivision of each interval, we just take supremum subdividing
by the middle point.
\begin{prop}
\label{prop:middleAp*}Let $1<p<\infty$. Let us define
\[
[w]_{\widetilde{A_{p}^{+,*}}}=\sup_{a<b<c,\,b=\frac{a+c}{2}}\frac{1}{(c-a)^{p}}\|w\chi_{(a,b)}\|_{L^{1,\infty}}\left(\int_{b}^{c}w^{-\frac{1}{p-1}}\right)^{p-1}.
\]
Then
\[
[w]_{\widetilde{A_{p}^{+,*}}}\leq[w]_{A_{p}^{+,*}}\leq2^{p}[w]_{\widetilde{A_{p}^{+,*}}}.
\]
\end{prop}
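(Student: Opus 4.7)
The first inequality $[w]_{\widetilde{A_p^{+,*}}} \le [w]_{A_p^{+,*}}$ is immediate, since the supremum defining $[w]_{\widetilde{A_p^{+,*}}}$ is taken over a subfamily of the triples $(a,b,c)$ considered for $[w]_{A_p^{+,*}}$. So the real content is in the reverse inequality, and the plan is to take an arbitrary triple $a<b<c$ and reduce it to a midpoint triple by extending one of the two sub-intervals $(a,b)$ or $(b,c)$ so that $b$ becomes the midpoint of the enlarged outer interval.

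The plan is to split into two cases depending on which side of the midpoint $b$ sits. Suppose first that $b\le \tfrac{a+c}{2}$. Set $a'=2b-c$; then $a'\le a$, $a'<b$, and $b=\tfrac{a'+c}{2}$. Because $(a,b)\subset(a',b)$, one has $\|w\chi_{(a,b)}\|_{L^{1,\infty}}\le \|w\chi_{(a',b)}\|_{L^{1,\infty}}$ (extending the support can only increase the weak $L^1$ norm). The right factor $\int_b^c \sigma$ is unchanged. Comparing the normalizing lengths, $c-a'=2(c-b)\le 2(c-a)$, so
\[
\frac{1}{(c-a)^p}\|w\chi_{(a,b)}\|_{L^{1,\infty}}\!\left(\int_b^c \sigma\right)^{p-1}
\le \frac{(c-a')^p}{(c-a)^p}\cdot\frac{1}{(c-a')^p}\|w\chi_{(a',b)}\|_{L^{1,\infty}}\!\left(\int_b^c \sigma\right)^{p-1}
\le 2^p[w]_{\widetilde{A_p^{+,*}}}.
\]
In the other case $b\ge\tfrac{a+c}{2}$, I would do the symmetric move: set $c'=2b-a\ge c$, keep $a'=a$, so that $b=\tfrac{a+c'}{2}$. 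Now $\|w\chi_{(a,b)}\|_{L^{1,\infty}}$ is unchanged and $\int_b^c\sigma\le\int_b^{c'}\sigma$, while $c'-a=2(b-a)\le 2(c-a)$, and the same inequality chain gives the factor $2^p$.

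Taking the supremum over all triples $a<b<c$ gives $[w]_{A_p^{+,*}}\le 2^p [w]_{\widetilde{A_p^{+,*}}}$. There is no real obstacle here; the only thing to be careful about is choosing the direction of extension correctly in each case so that the weak-norm and $\sigma$-integral both move in the favorable direction, with all the loss absorbed into the length normalization, where it is controlled by the factor $2$ because the enlarged outer interval has length at most twice that of the original.
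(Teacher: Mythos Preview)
Your argument is correct and is essentially the same as the paper's: in each case you enlarge the interval on the appropriate side so that $b$ becomes the midpoint, observe that the weak $L^{1,\infty}$ norm and the $\sigma$-integral can only increase under this extension, and absorb all loss into the length normalization, which contributes at most a factor $2^{p}$. The paper organizes the two cases by the position of the midpoint $m=\tfrac{a+c}{2}$ relative to $b$, but the content is identical to yours.
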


\begin{proof}
The first inequality is trivial, the supremum in the definition of
$[w]_{A_{p}^{+,*}}$ is taken over every subdivision of any interval
whilst, in the case of $[w]_{\widetilde{A_{p}^{+,*}}}$, just the subdivision
of the interval in two equal parts is considered in the supremum.
Conversely, let $a<b<c$ and $m=\frac{a+c}{2}.$

If $b=m$ there's nothing to do. If $a<m<b$ then we have that 
\begin{align*}
 & \frac{1}{(c-a)^{p}}\|w\chi_{(a,b)}\|_{L^{1,\infty}}\left(\int_{b}^{c}w^{-\frac{1}{p-1}}\right)^{p-1}\\
\leq & \frac{(b+b-a-a)^{p}}{(c-a)^{p}}\frac{1}{(b+b-a-a)^{p}}\|w\chi_{(a,b)}\|_{L^{1,\infty}}\left(\int_{b}^{b+b-a}w^{-\frac{1}{p-1}}\right)^{p-1}\\
\leq & \frac{2^{p}(c-a)^{p}}{(c-a)^{p}}\frac{1}{(b+b-a-a)^{p}}\|w\chi_{(a,b)}\|_{L^{1,\infty}}\left(\int_{b}^{b+b-a}w^{-\frac{1}{p-1}}\right)^{p-1}\\
= & 2^{p}\frac{1}{(b+b-a-a)^{p}}\|w\chi_{(a,b)}\|_{L^{1,\infty}}\left(\int_{b}^{b+b-a}w^{-\frac{1}{p-1}}\right)^{p-1}
\end{align*}
from which the desired conclusion follows.

Analogously, if $b<m<c$ then, 
\begin{align*}
 & \frac{1}{(c-a)^{p}}\|w\chi_{(a,b)}\|_{L^{1,\infty}}\left(\int_{b}^{c}w^{-\frac{1}{p-1}}\right)^{p-1}\\
\leq & \frac{(c-(b-(c-b)))^{p}}{(c-a)^{p}}\frac{1}{(c-(b-(c-b)))^{p}}\|w\chi_{(b-(c-b),b)}\|_{L^{1,\infty}}\left(\int_{b}^{c}w^{-\frac{1}{p-1}}\right)^{p-1}\\
\leq & \frac{2^{p}(c-a)^{p}}{(c-a)^{p}}\frac{1}{(c-(b-(c-b)))^{p}}\|w\chi_{(b-(c-b),b)}\|_{L^{1,\infty}}\left(\int_{b}^{c}w^{-\frac{1}{p-1}}\right)^{p-1}\\
= & 2^{p}\frac{1}{(c-(b-(c-b)))^{p}}\|w\chi_{(b-(c-b),b)}\|_{L^{1,\infty}}\left(\int_{b}^{c}w^{-\frac{1}{p-1}}\right)^{p-1}
\end{align*}
from which, again, the desired conclusion readily follows
\end{proof}
The proof of our next result is analogous to the one we have just
presented, hence, we omit it.
\begin{prop}
\label{prop:middleApq*}Let $1<p<\infty$. Let us define
\[
[w]_{\widetilde{A_{p,q}^{+,*}}}=\sup_{a<b<c,\,b=\frac{a+c}{2}}\left(\frac{1}{c-a}\|w^{q}\chi_{(a,b)}\|_{L^{1,\infty}}\right)^{\frac{1}{q}}\left(\frac{1}{c-a}\int_{b}^{c}w^{-p'}\right)^{\frac{1}{p'}}.
\]
Then
\[
[w]_{\widetilde{A_{p,q}^{+,*}}}\leq[w]_{A_{p,q}^{+,*}}\leq2^{\frac{1}{p'}+\frac{1}{q}}[w]_{\widetilde{A_{p,q}^{+,*}}}.
\]
\end{prop}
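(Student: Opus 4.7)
The plan is to mimic the proof of Proposition \ref{prop:middleAp*} very closely, adapting the exponent bookkeeping to the two factors $\tfrac1q$ and $\tfrac1{p'}$ that now appear. The first inequality $[w]_{\widetilde{A_{p,q}^{+,*}}}\leq[w]_{A_{p,q}^{+,*}}$ is immediate, since the restricted supremum (over the splits at the midpoint) is dominated by the full supremum.

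For the reverse inequality, fix an arbitrary triple $a<b<c$ and let $m=\tfrac{a+c}{2}$. If $b=m$, there is nothing to prove. Otherwise, I would split into the two cases according to whether $b$ lies to the right or to the left of $m$, constructing in each case an auxiliary midpoint-split triple that contains the original configuration.

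Case $a<m<b$: set $c'=b+(b-a)=2b-a$, so that $b$ is the midpoint of $(a,c')$. From $m<b$ one gets $b-a>\tfrac{c-a}{2}$, hence $c'-a=2(b-a)>c-a$; on the other hand $b<c$ gives $c'-a<2(c-a)$. In particular, $\int_b^c w^{-p'}\leq \int_b^{c'} w^{-p'}$ and $\frac{1}{c-a}\leq \frac{2}{c'-a}$. Therefore
\begin{align*}
&\left(\frac{1}{c-a}\|w^{q}\chi_{(a,b)}\|_{L^{1,\infty}}\right)^{\frac{1}{q}}\left(\frac{1}{c-a}\int_{b}^{c}w^{-p'}\right)^{\frac{1}{p'}}\\
&\qquad\leq 2^{\frac{1}{q}+\frac{1}{p'}}\left(\frac{1}{c'-a}\|w^{q}\chi_{(a,b)}\|_{L^{1,\infty}}\right)^{\frac{1}{q}}\left(\frac{1}{c'-a}\int_{b}^{c'}w^{-p'}\right)^{\frac{1}{p'}}\\
&\qquad\leq 2^{\frac{1}{q}+\frac{1}{p'}}[w]_{\widetilde{A_{p,q}^{+,*}}}.
\end{align*}
Case $b<m<c$: set $a'=b-(c-b)=2b-c$, so that $b$ is the midpoint of $(a',c)$. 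The symmetric inequalities $c-a'>c-a$ and $c-a'<2(c-a)$ hold, and monotonicity gives $\|w^{q}\chi_{(a,b)}\|_{L^{1,\infty}}\leq\|w^{q}\chi_{(a',b)}\|_{L^{1,\infty}}$. Repeating the above chain with $a'$ in place of $a$ yields the same bound $2^{1/q+1/p'}[w]_{\widetilde{A_{p,q}^{+,*}}}$. Taking the supremum over $a<b<c$ closes the argument.

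I do not expect any real obstacle here; the only slightly delicate point, compared with Proposition \ref{prop:middleAp*}, is that the constant loss comes out as $2^{\frac{1}{p'}+\frac{1}{q}}$ because the $A_{p,q}^{+,*}$ functional factorizes into an $L^{1,\infty}$-piece raised to $\tfrac{1}{q}$ and an $L^{p'}$-piece raised to $\tfrac{1}{p'}$, whereas in the $A_p^{+,*}$ case both pieces sit inside a single $p$-th power, yielding $2^p$.
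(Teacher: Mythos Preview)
Your proof is correct and is exactly the adaptation the paper has in mind: the paper omits the proof of Proposition~\ref{prop:middleApq*} and declares it analogous to that of Proposition~\ref{prop:middleAp*}, and your auxiliary endpoints $c'=2b-a$ (when $m<b$) and $a'=2b-c$ (when $b<m$) coincide with the paper's choices $b+(b-a)$ and $b-(c-b)$ there. The exponent bookkeeping yielding $2^{1/p'+1/q}$ is also right, since the total power of $\tfrac{1}{c-a}$ in the $A_{p,q}^{+,*}$ functional is $\tfrac{1}{q}+\tfrac{1}{p'}$.
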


\subsubsection{Necessity in Theorem \ref{thm:mixed}}
Suppose that 
\begin{equation}
\|w^{\frac{1}{p}}M^{+}f\|_{L^{p,\infty}}\leq K\|f\|_{L^{p}(w)},\label{NM+}
\end{equation}
holds for each $f\in L^{p}(w)$, and let us fix $a\in\R$ and $h>0$.

First of all, let us observe that it is not necessary to assume that
$\sigma=w^{1-p'}$ is locally integrable or that $\sigma(x)>0$ for
a.e. $x\in\R$:

On the one hand, suppose that $\sigma((a,a+h))=0$. Then under the convention
$0\cdot\infty=0$, it is clear that 
\[
\dfrac{1}{h}\|w\chi_{(a-h,a)}\|_{L^{1,\infty}}\left(\dfrac{1}{h}\int_{a}^{a+h}\sigma\right)^{p-1}=0.
\]

On the other hand, suppose that $\sigma((a,a+h))=\infty$. This implies
that $w^{-\frac{1}{p}}\notin L^{p'}((a,a+h))$, since $(w^{-\frac{1}{p}})^{p'}=\sigma$.
Therefore, there exists $g\in L^{p}((a,a+h))$ such that $gw^{-\frac{1}{p}}\notin L^{1}((a,a+h))$.
It follows that $M^{+}(gw^{-\frac{1}{p}})(x)=\infty$ for all $x<a$.

Moreover, since $\|gw^{-\frac{1}{p}}\|_{L^{p}(w)}=\|g\|_{L^{p}}<\infty$,
condition (\ref{NM+}) applied to $f=gw^{-\frac{1}{p}}$ implies that
$w(x)=0$ for a.e. $x\in(-\infty,a)$. Therefore, it follows that
\[
\dfrac{1}{h}\|w\chi_{(a-h,a)}\|_{L^{1,\infty}}\left(\dfrac{1}{h}\int_{a}^{a+h}\sigma\right)^{p-1}=0.
\]

Suppose then that $0<\sigma((a,a+h))<\infty$ and take $f=\sigma\chi_{(a,a+h)}$.
On the one hand, 
\begin{equation}
\|f\|_{L^{p}(w)}=\left(\int_{-\infty}^{\infty}\left(\sigma\chi_{(a,a+h)}\right)^{p}w\right)^{\frac{1}{p}}=\left(\int_{a}^{a+h}\left(w^{1-p'}\right)^{p}w\right)^{\frac{1}{p}}=(\sigma(a,a+h))^{\frac{1}{p}}.\label{NM+1}
\end{equation}

On the other hand, since $M^{+}f$ is non-decreasing on $(a-h,a)$,
for each $x\in(a-h,a)$, we have that 
\begin{align*}
M^{+}f(x) & \geq M^{+}f(a-h)=\sup_{y>0}\dfrac{1}{y}\int_{a-h}^{a-h+y}\sigma(t)\chi_{(a,a+h)}(t)dt\\
 & \geq\dfrac{1}{2h}\int_{a-h}^{a+h}\sigma(t)\chi_{(a,a+h)}(t)dt=\dfrac{1}{2h}\sigma((a,a+h)).
\end{align*}

Hence
\begin{align*}
\left\Vert w^{\frac{1}{p}}M^{+}f\right\Vert _{L^{p,\infty}} & \geq\left\Vert \chi_{(a-h,a)}w^{\frac{1}{p}}M^{+}f\right\Vert _{L^{p,\infty}}\geq\left\Vert \chi_{(a-h,a)}\dfrac{1}{2h}\sigma((a,a+h))w^{\frac{1}{p}}\right\Vert _{L^{p,\infty}}\\
 & =\dfrac{1}{2h}\sigma((a,a+h))\left\Vert \chi_{(a-h,a)}w^{\frac{1}{p}}\right\Vert _{L^{p,\infty}}\\
 & =\dfrac{1}{2h}\sigma((a,a+h))\left\Vert \chi_{(a-h,a)}w\right\Vert _{L^{1,\infty}}^{\frac{1}{p}}
\end{align*}

Gathering the estimates above and using \eqref{NM+}
\[
\dfrac{1}{2h}\sigma((a,a+h))\left\Vert \chi_{(a-h,a)}w\right\Vert _{L^{1,\infty}}^{\frac{1}{p}}\leq K(\sigma(a,a+h))^{\frac{1}{p}}.
\]
This yields
\[
\left(\dfrac{1}{2h}\right)^{p}\sigma((a,a+h))^{p-1}\left\Vert \chi_{(a-h,a)}w\right\Vert _{L^{1,\infty}}\leq K^{p}
\]
and by Proposition \ref{prop:middleAp*} we are done.

\subsubsection{Necessity in Theorem \ref{thm:mixedFrac}}
Assume that
\begin{equation}
\|wM_{\alpha}^{+}f\|_{L^{q,\infty}}\leq K\|f\|_{L^{p}(w^{p})},\label{NF+}
\end{equation}

holds for all $f\in L^{p}(w^{p})$ and let us fix $a\in\R$ and $h>0$.

First of all, let us observe that it is not necessary to assume that
$\sigma=w^{-p'}$ is locally integrable or that $\sigma(x)>0$ for almost every $x\in\R$:

On the one hand, if $\sigma((a,a+h))=0$, then under the convention
$0\cdot\infty=0$, it is clear that 
\[
\left(\dfrac{1}{h}\|w^{q}\chi_{(a-h,a)}\|_{L^{1,\infty}}\right)^{\frac{1}{q}}\left(\dfrac{1}{h}\d\int_{a}^{a+h}\sigma\right)^{\frac{1}{p'}}=0.
\]

On the other hand, suppose that $\sigma((a,a+h))=\infty$. This implies
that $w^{-1}\notin L^{p'}((a,a+h))$. Therefore, there exists $g\in L^{p}((a,a+h))$
such that $gw^{-1}\notin L^{1}((a,a+h))$. It follows that $M_{\alpha}^{+}(gw^{-1})(x)=\infty$
for all $x<a$.

Moreover, since $\|gw^{-1}\|_{L^{p}(w^{p})}=\|g\|_{L^{p}}<\infty$,
 \eqref{NF+} applied to $f=gw^{-1}$ implies that $w(x)=0$
for a.e. $x\in(-\infty,a)$. Therefore, it follows that 
\[
\left(\dfrac{1}{h}\|w^{q}\chi_{(a-h,a)}\|_{L^{1,\infty}}\right)^{\frac{1}{q}}\left(\dfrac{1}{h}\d\int_{a}^{a+h}\sigma\right)^{\frac{1}{p'}}=0.
\]

Suppose then that $0<\sigma((a,a+h))<\infty$ and take $f=\sigma\chi_{(a,a+h)}$.
On the one hand, 
\begin{equation}
\|f\|_{L^{p}(w^{p})}=\left(\d\int_{-\infty}^{\infty}\left(\sigma\chi_{(a,a+h)}\right)^{p}w^{p}\right)^{\frac{1}{p}}=\left(\d\int_{a}^{a+h}\left(w^{-p'}\right)^{p}w^{p}\right)^{\frac{1}{p}}=(\sigma(a,a+h))^{\frac{1}{p}}.\label{NF+1}
\end{equation}

Alternatively, since $M_{\alpha}^{+}f$ is non-decreasing on $(a-h,a)$,
for each $x\in(a-h,a)$, we have that 
\begin{align*}
M_{\alpha}^{+}f(x) & \geq M_{\alpha}^{+}f(a-h)=\sup_{y>0}\dfrac{1}{y^{1-\alpha}}\d\int_{a-h}^{a-h+y}\sigma(t)\chi_{(a,a+h)}(t)dt\\
 & \geq\dfrac{1}{(2h)^{1-\alpha}}\d\int_{a-h}^{a+h}\sigma(t)\chi_{(a,a+h)}(t)dt=(2h)^{\alpha-1}\sigma((a,a+h)).
\end{align*}

Taking this into account, we obtain that 
\[
\left\Vert wM_{\alpha}^{+}f\right\Vert _{L^{q,\infty}}\geq\left\Vert \chi_{(a-h,a)}wM_{\alpha}^{+}f\right\Vert _{L^{q,\infty}}\geq(2h)^{\alpha-1}\sigma((a,a+h))\left\Vert \chi_{(a-h,a)}w\right\Vert _{L^{q,\infty}}
\]
Combining this with (\ref{NF+1}) and \ref{NF+} we have that 
\[
(2h)^{\alpha-1}\sigma((a,a+h))\left\Vert \chi_{(a-h,a)}w\right\Vert _{L^{q,\infty}}\leq K(\sigma(a,a+h))^{\frac{1}{p}}
\]
Finally, taking into account that $\left\Vert w\chi_{(a-h,a)}\right\Vert _{L^{q,\infty}}=\left\Vert w^{q}\chi_{(a-h,a)}\right\Vert _{L^{1,\infty}}^{\frac{1}{q}}$
and also that $\alpha-1=-\frac{1}{p'}-\frac{1}{q}$, the estimate above yields
\[
\left(\dfrac{1}{2h}\left\Vert w^{q}\chi_{(a-h,a)}\right\Vert _{L^{1,\infty}}\right)^{\frac{1}{q}}\left(\dfrac{1}{2h}\d\int_{a}^{a+h}\sigma\right)^{\frac{1}{p'}}\leq K.
\]
At this point, Proposition \ref{prop:middleApq*} ends the proof.
\section{A quantitative two weight estimate }\label{sec:2W}

Before presenting our theorem, we need a few definitions and results
on Young functions and Luxemburg norms. The interested reader can
gain further insight into those topics in \cite{KR}, \cite{RR}.

A function $\Phi:[0,\infty)\rightarrow[0,\infty)$ is said to be a
Young function if $\Phi$ is continuous, convex, and also $\Phi(0)=0$. Since $\Phi$ is convex,  $\frac{\Phi(t)}{t}$
is not decreasing as well.

The Luxemburg average of a function $f$ in terms of a Young function
$\Phi$ on an interval $I$ is defined by 
\[
\|f\|_{\Phi,I}:=\inf\left\{ \lambda>0:\,\frac{1}{|I|}\int_{I}\Phi\left(\frac{|f|}{\lambda}\right)d\mu\leq1\right\} 
\]
We would like to note that if $\Phi(t)=t^{r}$, $r\geq1$, then $\|f\|_{\Phi,I}=\left(\frac{1}{|I|}\int_{I}|f|^{r}\right)^{1/r}$,
namely, we recover the $L^{r}\left(I,\frac{dx}{|I|}\right)$ norm.
Below, we list some interesting properties. \begin{enumerate} 
\item If $I\subset I'$ are intervals
such that $\rho\geq\frac{|I'|}{|I|}>1$, then 
\begin{equation}
\|f\|_{\Phi,I}\leq\rho\|f\|_{\Phi,I'}.\label{eq:Agrandar}
\end{equation}
\item  If $\Phi,\Psi$ are Young
functions such that $\Phi(t)\leq\kappa\Psi(t)$ for all $t\geq c$,
then 
\[
\|f\|_{\Phi,I}\leq(\Phi(c)+\kappa)\|f\|_{\Psi,I}
\]
for every interval $I$. 
\item If $\Phi$
and $\overline{\Phi}$ are Young functions such that for every $t>0$
\[
\Phi^{-1}(t)\overline{\Phi}^{-1}(t)\leq\kappa t,
\]
then for every interval $I$, we have that 
\begin{equation}
\frac{1}{|I|}\int_{I}|fg|d\mu\leq2\kappa\|f\|_{\Phi,I}\|g\|_{\overline{\Phi},I}.\label{eq:HIneqGen}
\end{equation}
\end{enumerate}
Armed with the definitions and properties we have just presented, we define one sided
counterparts of the generalizations of the $A_{\infty}$ and $A_{p}$ constants introduced \cite{PR}. 
\begin{defn}
Given $p>1$ a Young function $\Phi$ and weights $w$ and $\sigma$,
\[
[\sigma,\Phi]_{W_{p}^{-}}=\sup_{I}\frac{1}{\sigma(I)}\int_{I}M_{\Phi}^{+}\left(\sigma^{\frac{1}{p}}\chi_{I}\right)^{p}
\]
\[
[w,\sigma,\Phi]_{A_{p}^{+}}=\sup_{a<b<c}\frac{w(a,b)}{c-a}\|\sigma^{\frac{1}{p'}}\chi_{(b,c)}\|_{\Phi,(a,c)}^{p}
\]
\end{defn}

\begin{rem}
\label{rem:Choices} Note that if $\Phi(t)=t^{p'}$ then we have that
if $\sigma=w^{-\frac{1}{p-1}}$ then 
\[
\|\sigma^{\frac{1}{p'}}\chi_{(b,c)}\|_{\Phi,(a,c)}^{p}=\left(\frac{1}{c-a}\int_{b}^{c}\sigma\right)^{\frac{p}{p'}}=\left(\frac{1}{c-a}\int_{b}^{c}\sigma\right)^{p-1}.
\]
Hence, if additionally $\sigma=w^{-\frac{1}{p-1}}$, then 
\[
[w,\sigma,\Phi]_{A_{p}^{+}}=[w]_{A_{p}^{+}}.
\]
Also, if $\Phi(t)=t^{p}$, 
\[
M_{\Phi}^{+}\left(\sigma^{\frac{1}{p}}\chi_{I}\right)^{p}=M_{p}^{+}\left(\sigma^{\frac{1}{p}}\chi_{I}\right)^{p}=M^{+}(\sigma\chi_{I}).
\]
From this, it readily follows that $[\sigma,\Phi]_{W_{p}^{-}}=[w]_{A_{\infty}^{-}}$. 
\end{rem}

\begin{thm}
\label{thm:2W} Let $\Phi$ and $\overline{\Phi}$ be Young functions
such that for every $t>0,$ 
\[
\Phi^{-1}(t)\overline{\Phi}^{-1}(t)\leq\kappa t.
\]
Then, for every $p>1$ 
\[
\|M^{+}(f\sigma)\|_{L^{p}(w)}\lesssim\left([\sigma,\overline{\Phi}]_{W_{p}^{-}}[w,\sigma,\Phi]_{A_{p}^{+}}\right)^{\frac{1}{p}}\|f\|_{L^{p}(\sigma)}.
\]
\end{thm}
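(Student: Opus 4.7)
The approach will mimic the Christ--Fefferman arguments used in the proofs of Theorems \ref{thm:Fuerte} and \ref{thm:mixed}, substituting the classical H\"older inequality by the generalized H\"older inequality \eqref{eq:HIneqGen} for the complementary pair $(\Phi,\overline{\Phi})$, and replacing the role of the one-sided $A_\infty$ constant by the testing-type constant $[\sigma,\overline{\Phi}]_{W_p^-}$.

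The first step is the Calder\'on--Zygmund decomposition of $M^+(f\sigma)$ at heights $2^k$, yielding intervals $I_{j,k}=(a_{jk},b_{jk})$ with $\frac{1}{b_{jk}-x}\int_x^{b_{jk}}f\sigma>2^k$ for $x\in I_{j,k}$, together with the sets $E_{j,k}=I_{j,k}\cap\{M^+(f\sigma)\leq 2^{k+1}\}$ and $F_k=\{M^+(f\sigma)\leq 2^{k+2}\}$. The standard linearization gives $\|M^+(f\sigma)\|_{L^p(w)}^p\lesssim \sum_{j,k}2^{kp}w(E_{j,k})$. For each $I_{j,k}=(a,b)$ I would repeat the $\sigma$-dyadic subdivision $x_0=a$, $\int_{x_{i+1}}^b\sigma=\int_{x_i}^{x_{i+1}}\sigma$, and set $\tilde{x}_i=\inf\{E_{j,k}\cap(x_i,x_{i+1})\}$, so that $\sigma(x_i,b)=2^{-i}\sigma(a,b)$ and the key $\sigma$-sparsity $\int_{\tilde{x}_i}^{x_{i+1}}\sigma\leq \int_{x_{i+1}}^b\sigma$ holds.

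Using the Calder\'on--Zygmund property at $\tilde{x}_i$ together with the generalized H\"older inequality \eqref{eq:HIneqGen} applied to the factorization $f\sigma=(f\sigma^{1/p})\cdot\sigma^{1/p'}$ on the interval $(\tilde{x}_i,b)$, I would obtain
\[
2^k<\tfrac{1}{b-\tilde{x}_i}\int_{\tilde{x}_i}^b f\sigma\leq 2\kappa\,M_{\overline{\Phi}}^+(f\sigma^{1/p})(\tilde{x}_i)\,\|\sigma^{1/p'}\|_{\Phi,(\tilde{x}_i,b)},
\]
since $\|f\sigma^{1/p}\|_{\overline{\Phi},(\tilde{x}_i,b)}\leq M_{\overline{\Phi}}^+(f\sigma^{1/p})(\tilde{x}_i)$. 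The triple $(\tilde{x}_i,x_{i+1},b)$ in the bump constant then yields
\[
\tfrac{w(\tilde{x}_i,x_{i+1})}{b-\tilde{x}_i}\,\|\sigma^{1/p'}\chi_{(x_{i+1},b)}\|_{\Phi,(\tilde{x}_i,b)}^p\leq [w,\sigma,\Phi]_{A_p^+}.
\]
The main technical obstacle here will be to bridge the mismatch between the full Luxemburg norm $\|\sigma^{1/p'}\|_{\Phi,(\tilde{x}_i,b)}$ produced by H\"older and the restricted norm $\|\sigma^{1/p'}\chi_{(x_{i+1},b)}\|_{\Phi,(\tilde{x}_i,b)}$ that matches the bump; the expected estimate $\|\sigma^{1/p'}\|_{\Phi,(\tilde{x}_i,b)}\lesssim \|\sigma^{1/p'}\chi_{(x_{i+1},b)}\|_{\Phi,(\tilde{x}_i,b)}$ should follow from the subadditivity of Luxemburg norms together with the $\sigma$-sparsity above and the convexity of $\Phi$.

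Combining the two previous displays produces, for every $i$,
\[
2^{kp}w\bigl(E_{j,k}\cap(x_i,x_{i+1})\bigr)\lesssim [w,\sigma,\Phi]_{A_p^+}\,M_{\overline{\Phi}}^+(f\sigma^{1/p})(\tilde{x}_i)^p\,(b-\tilde{x}_i).
\]
Summing over $i$ and using Lemma \ref{lem:Key} with $\lambda_1=2^{k+1}$, $\lambda_2=2^{k+2}$ to convert the factor $(b-\tilde{x}_i)$ into $|F_k\cap(\tilde{x}_i,b)|$, and then using the bounded-overlap collapse along the lines of \eqref{eq:Sumas} when summing in $j,k$, the triple sum collapses into a single integral of $M_{\overline{\Phi}}^+(f\sigma^{1/p})^p$ against Lebesgue measure over a set of bounded multiplicity. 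The final step is to absorb this integral into $\|f\|_{L^p(\sigma)}^p$ by means of the $W_p^-$ constant: the role played by $[\sigma]_{A_{p'}^-}$ in Theorem \ref{thm:Fuerte} is now played by the Sawyer-type testing condition $\int_I M_{\overline{\Phi}}^+(\sigma^{1/p}\chi_I)^p\leq [\sigma,\overline{\Phi}]_{W_p^-}\sigma(I)$, which, through a standard localization/duality argument, upgrades to a global norm inequality controlling the Lebesgue integral of $M_{\overline{\Phi}}^+(f\sigma^{1/p})^p$ by $[\sigma,\overline{\Phi}]_{W_p^-}\,\|f\|_{L^p(\sigma)}^p$, thus delivering the advertised constant $([\sigma,\overline{\Phi}]_{W_p^-}[w,\sigma,\Phi]_{A_p^+})^{1/p}$.
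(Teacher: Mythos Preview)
Your overall Christ--Fefferman strategy is sound, but two of the specific choices you make create genuine gaps that the paper's proof avoids by proceeding differently.

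\textbf{The final step is false as stated.} You claim that the testing quantity $[\sigma,\overline\Phi]_{W_p^-}$ ``upgrades'' to a global bound $\int_{\mathbb R} M_{\overline\Phi}^{+}(f\sigma^{1/p})^p\lesssim[\sigma,\overline\Phi]_{W_p^-}\|f\|_{L^p(\sigma)}^p$. Setting $h=f\sigma^{1/p}$ one has $\|f\|_{L^p(\sigma)}=\|h\|_{L^p}$, so this is exactly the unweighted $L^p$--boundedness of $M_{\overline\Phi}^{+}$. That fails without a $B_p$ assumption on $\overline\Phi$: already for the admissible pair $\overline\Phi(t)=t^p$, $\Phi(t)=t^{p'}$ of Remark~\ref{rem:Choices} it would read $\|M^{+}g\|_{L^1}\lesssim\|g\|_{L^1}$. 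The paper therefore does \emph{not} work with a general $f$: it first invokes the one-sided Sawyer testing characterization to reduce the full norm inequality to $\sup_I\sigma(I)^{-1}\int_I M^{+}(\sigma\chi_I)^p w$, and runs the entire Christ--Fefferman machine with the fixed localized datum $\sigma\chi_I$. The quantity that emerges at the end is then precisely $\int_I M_{\overline\Phi}^{+}(\sigma^{1/p}\chi_I)^p$, which $[\sigma,\overline\Phi]_{W_p^-}$ bounds by definition---no extrapolation to general $f$ is ever needed.

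\textbf{The Luxemburg mismatch is not fixable with the $\sigma$-bisection.} You need $\|\sigma^{1/p'}\|_{\Phi,(\tilde x_i,b)}\lesssim\|\sigma^{1/p'}\chi_{(x_{i+1},b)}\|_{\Phi,(\tilde x_i,b)}$ and propose to get it from $\int_{\tilde x_i}^{x_{i+1}}\sigma\le\int_{x_{i+1}}^b\sigma$ together with convexity. But that comparison controls only the $L^{p'}$-moment of $\sigma^{1/p'}$; for a general Young function (e.g.\ $\Phi(t)=t^{rp'}$, $r>1$) the claim would require $\int_{\tilde x_i}^{x_{i+1}}\sigma^{r}\lesssim\int_{x_{i+1}}^{b}\sigma^{r}$, which simply does not follow. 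The paper avoids this by switching to the \emph{Lebesgue} bisection $b-x_{i+1}=x_{i+1}-x_i$ and applying \eqref{eq:HIneqGen} on the smaller interval $(x_{i+2},b)$; from there only the enlargement property \eqref{eq:Agrandar} with $\rho\le4$ is needed, and that holds for every $\Phi$. The Lebesgue bisection is also what makes the passage from $(x_{i+2}-x_{i+1})$ to $|F_k\cap(\tilde x_i,x_{i+2})|$ and the subsequent bounded-overlap collapse work cleanly.
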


Before proceeding with the proof of this result, we note that, as we
mentioned after Theorem \ref{thm:Fuerte}, if $w\in A_{p}^{+}$ and
$\sigma=w^{-\frac{1}{p-1}}$, then, choosing $\Phi(t)=t^{p'}$ and
$\overline{\Phi}(t)=t^{p}$, we have, by Remark \ref{rem:Choices}, that
\[
\left([\sigma,\overline{\Phi}]_{W_{p}^{-}}[w,\sigma,\Phi]_{A_{p}^{+}}\right)^{\frac{1}{p}}=\left([\sigma]_{A_{\infty}^{-}}[w]_{A_{p}^{+}}\right)^{\frac{1}{p}}
\]
and then \eqref{eq:mixedconstant} follows from the Theorem we have
just stated. 
\begin{proof}[Proof of Theorem \ref{thm:2W}]
It follows by inspection of \cite{MROdT,STesting} that 
\[
\|M^{+}(f\sigma)\|_{L^{p}(w)}\lesssim\sup_{I}\left(\frac{1}{\sigma(I)}\int_{I}M^{+}(\sigma\chi_{I})^{p}wd\mu\right)^{\frac{1}{p}}\|f\|_{L^{p}(\sigma)}.
\]
Consequently it suffices to show that 
\[
\sup_{I}\frac{1}{\sigma(I)}\int_{I}M^{+}(\sigma\chi_{I})^{p}wdx\lesssim[\sigma,\overline{\Phi}]_{W_{p}^{-}}[w,\sigma,\Phi]_{A_{p}^{+}}.
\]
Let us call $I=(\alpha,\beta)$. Let $\lambda>1$. For each $k\in\mathbb{Z}$
we consider 
\[
\Omega_{k}=\left\{ x\in\mathbb{R}\,:\,M^{+}(\sigma\chi_{I})(x)>\lambda^{k}\right\} .
\]
Since for every $k\in\mathbb{Z}$, $\Omega_{k}$ is an open set, there
exists a sequence $\{I_{jk}\}_{j}$ of pairwise disjoint intervals
such that $O_{k}=\bigcup_{j}I_{jk}$ and such that 
\[
\frac{1}{b_{jk}-x}\int_{x}^{b_{jk}}\sigma\chi_{I}>\lambda^{k}\qquad\text{for every }x\in I_{jk}=(a_{jk},b_{jk}).
\]
The intervals $I_{jk}$ are contained in $(-\infty,\beta]$
since if $x>\beta$ then $M^{+}(\sigma\chi_{I})(x)=0$. If additionally
$M^{+}(\sigma\chi_{I})(\alpha)<\lambda^{k}$ then they are actually
contained in $I$. Now we consider 
\[
E_{j,k}=\left\{ x\in I_{jk}:\,M^{+}(\sigma\chi_{I})(x)\leq\lambda^{k+1}\right\} 
\]
It is clear that the sets $E_{j,k}$ are pairwise disjoint. 
\[
\int_{I}M^{+}(\sigma\chi_{I})^{p}w=\int_{\mathbb{R}}M^{+}(\sigma\chi_{I})^{p}w\chi_{I}\leq\lambda^{p}\sum_{k,j}\lambda^{kp}(w\chi_{I})(E_{j,k})=\lambda^{p}\sum_{k,j}\lambda^{kp}w(I\cap E_{j,k})
\]
Now we focus on $\lambda^{kp}w(E_{j,k}\cap I)$ Let us call $(a,b)=\widetilde{I_{jk}}=I_{jk}\cap I=(a_{jk},b_{jk})\cap I$.
We split this interval as follows. We consider a sequence of points
such that $b-x_{i+1}=x_{i+1}-x_{i}$ where $x_{0}=a$. Note that for
at most a finite family of intervals $E_{j,k}\cap(x_{i},x_{i+1})=\emptyset$
and hence $w(E_{j,k}\cap(x_{i},x_{i+1}))=0$. Then if we call $\tilde{x_{i}}=\inf\left\{ z\in E_{j,k}\cap(x_{i},x_{i+1})\right\} $,
we have that 
\begin{align*}
 & \lambda^{kp}w(E_{j,k}\cap I)\\
 & =\lambda^{kp}\sum_{i=0}^{\infty}w(E_{j,k}\cap(x_{i},x_{i+1}))\\
 & =\lambda^{kp}\sum_{i=0}^{\infty}w(E_{j,k}\cap(\tilde{x_{i}},x_{i+1}))\\
{\scriptscriptstyle \left[x_{i+2}\in(a,b)\right]} & \leq\sum_{i=0}^{\infty}\left(\frac{1}{b-x_{i+2}}\int_{x_{i+2}}^{b}\sigma\chi_{I}\right)^{p}w(E_{j,k}\cap(\tilde{x_{i}},x_{i+1}))\\
 & =\sum_{i=0}^{\infty}\left(\frac{1}{b-x_{i+2}}\int_{x_{i+2}}^{b}\sigma^{\frac{1}{p}+\frac{1}{p'}}\chi_{I}\right)^{p}\frac{1}{b-\tilde{x_{i}}}w(E_{j,k}\cap(\tilde{x_{i}},x_{i+1}))(b-\tilde{x_{i}})\\
{\scriptscriptstyle [(\ref{eq:HIneqGen})]} & \leq(2\kappa)^{p}\sum_{i=0}^{\infty}\|\sigma^{\frac{1}{p}}\chi_{I}\|_{\overline{\Phi},(x_{i+2},b)}^{p}\|\sigma^{\frac{1}{p'}}\|_{\Phi,(x_{i+2},b)}^{p}\frac{1}{b-\tilde{x_{i}}}w(\tilde{x_{i}},x_{i+1})(b-x_{i})\\
{\scriptscriptstyle [(\ref{eq:Agrandar})]} & \leq(4\kappa)^{p}\sum_{i=0}^{\infty}\|\sigma^{\frac{1}{p}}\chi_{I}\|_{\overline{\Phi},(x_{i+2},b)}^{p}\|\sigma^{\frac{1}{p'}}\|_{\Phi,(x_{i+1},b)}^{p}\frac{1}{b-\tilde{x_{i}}}w(\tilde{x_{i}},x_{i+1})(b-x_{i})\\
 & \leq4(4\kappa)^{p}[w,\sigma,\Phi]_{A_{p}^{+}}\sum_{i=0}^{\infty}\|\sigma^{\frac{1}{p}}\chi_{I}\|_{\overline{\Phi},(x_{i+2},b)}^{p}(x_{i+2}-x_{i+1})
\end{align*}
Here we define again
\[
F_{k}=\left\{ x\in\mathbb{R}:\,M^{+}f(x)\leq\lambda^{k+2}\right\} .
\]
Then, taking into account Lemma \ref{lem:Key} we have that since
$Mf(\tilde{x}_{i})\leq\lambda^{k+1}$, 
\[
|(\tilde{x_{i}},x_{i+2})|\leq\frac{1}{1-\frac{\lambda^{k+1}}{\lambda^{k+2}}}|(\tilde{x_{i}},x_{i+2})\cap F_{k}|=\lambda'|(\tilde{x_{i}},x_{i+2})\cap F_{k}|.
\]
Bearing this in mind,
\begin{align*}
 & \sum_{i=0}^{\infty}\|\sigma^{\frac{1}{p}}\chi_{I}\|_{\overline{\Phi},(x_{i+2},b)}^{p}(x_{i+2}-x_{i+1})\\
 & \leq\sum_{i=0}^{\infty}\|\sigma^{\frac{1}{p}}\chi_{I}\|_{\overline{\Phi},(x_{i+2},b)}^{p}|(\tilde{x_{i}},x_{i+2})|\\
 & \leq\lambda'\sum_{i=0}^{\infty}\|\sigma^{\frac{1}{p}}\chi_{I}\|_{\overline{\Phi},(x_{i+2},b)}^{p}|F_{j,k}\cap(\tilde{x_{i}},x_{i+2})|\\
{\scriptscriptstyle [(\ref{eq:Agrandar})]} & \leq4\lambda'\sum_{i=0}^{\infty}\left(\inf_{z\in(x_{i},x_{i+2})}M_{\overline{\Phi}}^{+}\left(\sigma^{\frac{1}{p}}\chi_{I}\right)^{p}(z)\right)|F_{k}\cap(x_{i},x_{i+2})|\\
 & \leq4\lambda'\sum_{i=0}^{\infty}\int_{F_{k}\cap(x_{i},x_{i+2})}M_{\overline{\Phi}}^{+}\left(\sigma^{\frac{1}{p}}\chi_{I}\right)^{p}\\
 & =8\lambda'\int_{F_{k}\cap\widetilde{I_{jk}}}M_{\overline{\Phi}}^{+}\left(\sigma^{\frac{1}{p}}\chi_{I}\right)^{p}
\end{align*}
Combining the inequalities above, 
\begin{align*}
\int_{I}M^{+}(\sigma\chi_{I})^{p}w & \lesssim[w,\sigma,\Phi]_{A_{p}^{+}}\sum_{k,j}\int_{F_{k}\cap\widetilde{I_{jk}}}M_{\overline{\Phi}}^{+}\left(\sigma^{\frac{1}{p}}\chi_{I}\right)^{p}\\
 & \lesssim[w,\sigma,\Phi]_{A_{p}^{+}}\int_{I}M_{\overline{\Phi}}^{+}\left(\sigma^{\frac{1}{p}}\chi_{I}\right)^{p}\\
 & \lesssim[w,\sigma,\Phi]_{A_{p}^{+}}[\sigma,\overline{\Phi}]_{W_{p}^{-}}\sigma(I).
\end{align*}
and we are done. 
\section*{Acknowledgements}
The first author would like to thank Prof. Pedro Ortega for suggesting the possibility of extending the Christ-Fefferman approach for the maximal function to the one sided setting.

The first and the second authors were partially supported by the Spanish Ministry of Science and Innovation
through the project PID2022-136619NB-I00 funded by MCIN/AEI/10.13039/501100011033/FEDER, UE and by Junta de Andaluc\'ia through the project FQM-354.

\end{proof}
\bibliographystyle{amsplain}
\bibliography{refs}

\end{document}